\newtheorem{theorem}{Theorem}
\newtheorem{Lemma}{Lemma}
\numberwithin{equation}{section}
\title{Solutions of $x_1^2+x_2^2-x_3^2=n^2$ with small $x_3$}
\author{Stephan Baier} 
\address{Stephan Baier,
Ramakrishna Mission Vivekananda Educational and Research Institute, Department of Mathematics, G. T. Road, PO Belur Math, Howrah, West Bengal 711202, India}
\email{stephanbaier2017@gmail.com}
\subjclass[2020]{11E20,11L05,11D09} 
\keywords{indefinite ternary quadratic forms, Pythagorean triples, Kloosterman sums}
\begin{document}
\begin{abstract} Friedlander and Iwaniec investigated integral solutions $(x_1,x_2,x_3)$ of the equation $x_1^2+x_2^2-x_3^2=D$, where $D$ is square-free and satisfies the congruence condition $D\equiv 5\bmod{8}$. 
They obtained an asymptotic formula for solutions with $x_3\asymp M$, where $M$ is much smaller than $\sqrt{D}$. To be precise, their condition is $M\ge D^{1/2-1/1332}$. Their analysis led them to averages of 
certain Weyl sums. The condition of $D$ being square-free is essential in their work. We investigate the "opposite" case when $D=n^2$ is a square of an odd integer $n$. 
This case is different in nature and leads to sums of Kloosterman sums. We obtain an asymptotic formula for solutions with $x_3\asymp M$, where $M\ge D^{1/2-1/16+\varepsilon}$. 
\end{abstract}
\maketitle
\tableofcontents

\maketitle
\section{Introduction} 
Friedlander and Iwaniec \cite{FrIw} considered representations of square-free positive numbers $D$ satisfying the congruence condition $D\equiv 5\bmod{8}$ by the indefinite ternary quadratic form
$$
x_1^2+x_2^2-x_3^2=D. 
$$
One may refer to solutions of this equation as inhomogeneous Pythagorean triples.
They proved that
$$
\sum\limits_{\substack{(x_1,x_2,x_3)\in \mathbb{Z}^3\\ x_1^2+x_2^2=D+x_3^2}} F(x_3)=
\frac{48}{\pi} L(1,\chi_D)\int\limits_{\mathbb{R}} F(x_3)+O\left(D^{1/6-1/3996}M^{2/3}\right),
$$
where $\chi_D$ is the real character of conductor $D$ and $F(x_3)$ is any smooth function supported in $[M,2M]$ satisfying $|F^{(\alpha)}(x)|\le M^{-\alpha}$ for $\alpha=0,...,6$. This gives an asymptotic formula if
$$
M>D^{1/2-1/1332}.
$$ 
The essential point is that they were able to break the $\sqrt{D}$ barrier for the size of $M$. Their analysis led them to averages of Weyl sums of the form
$$
\sum\limits_{b^2\equiv D\bmod{c}} e\left(\frac{\overline{2}hb}{c}\right)
$$ 
over $c$. In \cite{DFI}, they managed to obtain non-trivial savings in such averages, using spectral methods. However, if $D$ is a square rather than a square-free number, this result does not allow to break the $\sqrt{D}$ barrier. In this article, we consider this "opposite" situation when $D=n^2$ is a square, i.e. we investigate the equation
$$
x_1^2+x_2^2-x_3^2=n^2
$$ 
and aim to obtain an asymptotic formula for the number of its integral solutions $(x_1,x_2,x_3)$ with $x_3\asymp M$, where $M$ is much smaller than $n$. Our method starts out in a similar way as in \cite{FrIw}, but here we are naturally led to averages of Kloosterman sums in place of the above-mentioned Weyl sums. We achieve an asymptotic estimate if 
$$
M>D^{1/2-1/16+\varepsilon}.
$$

\section{Setup of the problem and initial transformations} \label{ini}
Throughout this article, we shall adopt the following convention: For any integers $k,l$, not both 0, we denote by $(k,l)$ the greatest common positive divisor. We also assume $\varepsilon$ to be an arbitrarily small but fixed positive number.  
We only consider the case when $n$ is an {\bf odd number}. In the situation investigated by Friedlander and Iwaniec \cite{FrIw}, the congruence condition $D\equiv 5\bmod{8}$ forces $x_3$ to be even. Here, for convenience, we want to keep the same condition. Similarly as in \cite{FrIw}, we start with an expression of the form
\begin{equation} \label{S}
S=\sum\limits_{\substack{(x_1,x_2,x_3)\in \mathbb{Z}^3\\ x_1^2+x_2^2-x_3^2=n^2\\ 2|x_3}} 
\Phi_1(x_1)\Phi_2(x_2)\Phi_3(x_3),
\end{equation}
where $\Phi_1,\Phi_2,\Phi_3$ are smooth weight functions satisfying the following conditions. We assume that 
$$
1\le M\le n, \quad X:=2M+n, \quad 1\le Y\le M 
$$
and take $\Phi_1: \mathbb{R}\rightarrow [0,1]$ to be a smooth function which satisfies
$$
\Phi_1(x):=\begin{cases} 0 & \mbox{ if } x\le Y/2 \\ \in [0,1]  & \mbox{ if } Y/2\le x\le Y \\ 1 & \mbox{ if }Y\le x\le X\\ \in [0,1] & \mbox{ if } X\le x\le 2X\\ 0 & \mbox{ if } x\ge 2X \end{cases}  
$$
and 
\begin{equation} \label{deribound}
|\Phi_1^{(j)}(x)| \ll_j Y^{-j} 
\end{equation}
for all $j\in \mathbb{N}$. In addition, we assume that $\Phi_1$ is monotonically increasing in $[Y/2,Y]$ and monotonically decreasing in $[X,2X]$.  
We further set $\Phi_2:=\Phi_1$ and take $\Phi_3$ to be of the form
\begin{equation} \label{Phi3setup}
\Phi_3(x)=\Phi\left(\frac{x}{M}\right),
\end{equation}
where $\Phi:\mathbb{R}\rightarrow [0,1]$ is a smooth function supported in $[1,2]$ which is not constant 0.  Since $x_3$ is supposed to be even and $n$ is odd, precisely one of $x_1$ and $x_2$ is odd. By symmetry in $x_1$ and $x_2$, we therefore have
\begin{equation} \label{2S1}
S=2S_1,
\end{equation}
where 
\begin{equation*}
S_1:=\sum\limits_{\substack{(x_1,x_2,x_3)\in \mathbb{Z}^3\\ x_1^2+x_2^2-x_3^2=n^2\\ 2\nmid x_2, \ 2|x_3}} 
\Phi_1(x_1)\Phi_2(x_2)\Phi_3(x_3).
\end{equation*}
In the following, we transform the sum $S_1$.

Similarly as in \cite{FrIw}, we first make a change of variables, writing
\begin{equation} \label{linearchange}
a=x_2+x_3, \quad b=x_2-x_3.
\end{equation}
Then
$$
(ab,2)=1
$$
and 
$$
x_2=\frac{a+b}{2}, \quad x_3=\frac{a-b}{2}. 
$$
Hence, 
$$
S_1=\sum\limits_{\substack{(x_1,a,b)\in \mathbb{Z}^3\\ (ab,2)=1\\ ab=(n-x_1)(n+x_1)}} \Phi_1(x_1)\Phi_2\left(\frac{a+b}{2}\right)\Phi_3\left(\frac{a-b}{2}\right).
$$
We have in mind the case when $x_3$ is small compared to $x_2$, so $a$ and $b$ will be of about the same size. We note that in contrast to \cite{FrIw}, we may here break $D-x_1^2$ into $(n-x_1)(n+x_1)$ which significantly changes the nature of this problem and is advantageous.  

Next, recalling that $x_1$ is necessarily even, we set $x_1=2c$ and write $S_1$ in the form
\begin{equation} \label{S1}
S_1= \sum\limits_{\substack{a\in \mathbb{Z}\\ (a,2)=1}} \sum\limits_{\substack{c\in \mathbb{Z}\\ (n-2c)(n+2c)\equiv 0 \bmod{a}}} 
\Phi_1(2c)\Phi_2\left(\frac{a}{2}+\frac{n^2-4c^2}{2a}\right)\Phi_3\left(\frac{a}{2}-\frac{n^2-4c^2}{2a}\right). 
\end{equation}
We further write
$$
a_1=(a,n-2c), \quad a_2=(a,n+2c).
$$
Then the congruence relation
$$
(n-2c)(n+2c)\equiv 0 \bmod{a}
$$ 
is equivalent to $a|a_1a_2$. Hence, our sum turns into
\begin{equation*}
\begin{split}
S_1= & \sum\limits_{\substack{a_1,a_2\in \mathbb{N}\\ (a_1a_2,2)=1}}\sum\limits_{\substack{a\in \mathbb{Z}\\ a_1|a \\ a_2|a\\ a|a_1a_2}} \sum\limits_{\substack{c\in \mathbb{Z}\\ a_1|n-2c\\ a_2|n+2c \\ \left(a/a_1,(n-2c)/a_1\right)=1\\ \left(a/a_2,(n+2c)/a_2\right)=1}} 
\Phi_1(2c)\Phi_2\left(\frac{a}{2}+\frac{n^2-4c^2}{2a}\right)\Phi_3\left(\frac{a}{2}-\frac{n^2-4c^2}{2a}\right). 
\end{split}
\end{equation*}
It is desirable to break this sum into two nearly symmetric parts accounting for the contributions of $a_1>a_2$ and $a_1\le a_2$. (This is reminiscent of Dirichlet's parabola trick.) We thus write  
\begin{equation} \label{dec}
S_1=S_1^{\sharp}+S_1^{\flat},
\end{equation}
where 
$$
S_1^{\sharp}:= \sum\limits_{\substack{a_1,a_2\in \mathbb{N}\\ (a_1a_2,2)=1}}\sum\limits_{\substack{a\in \mathbb{Z}\\ a_1|a \\ a_2|a\\ a|a_1a_2}} \sum\limits_{\substack{c\in \mathbb{Z}\\ a_1|n-2c\\ a_2|n+2c \\ \left(a/a_1,(n-2c)/a_1\right)=1\\ \left(a/a_2,(n+2c)/a_2\right)=1}} 
\Phi_1(2c)\Phi_2\left(\frac{a}{2}+\frac{n^2-4c^2}{2a}\right)\Phi_3\left(\frac{a}{2}-\frac{n^2-4c^2}{2a}\right)\chi_{>1}\left(\frac{a_1}{a_2}\right)
$$
and 
$$
S_1^{\flat}:= \sum\limits_{\substack{a_1,a_2\in \mathbb{N}\\ (a_1a_2,2)=1}}\sum\limits_{\substack{a\in \mathbb{Z}\\ a_1|a \\ a_2|a\\ a|a_1a_2}} \sum\limits_{\substack{c\in \mathbb{Z}\\ a_1|n-2c\\ a_2|n+2c \\ \left(a/a_1,(n-2c)/a_1\right)=1\\ \left(a/a_2,(n+2c)/a_2\right)=1}} 
\Phi_1(2c)\Phi_2\left(\frac{a}{2}+\frac{n^2-4c^2}{2a}\right)\Phi_3\left(\frac{a}{2}-\frac{n^2-4c^2}{2a}\right)\chi_{\ge 1}\left(\frac{a_2}{a_1}\right).
$$
Here $\chi_{\mathcal{C}}$ denotes the characteristic function with respect to a condition $\mathcal{C}$.
We deal only with the part $S_1^{\sharp}$. The treatment of $S_1^{\flat}$ is similar and leads to the same asymptotic estimate as for $S_1^{\sharp}$. We shall later indicate which modifications need to be made when dealing with $S_1^{\flat}$.

It will be useful to bound $\chi_{>1}$ from below and above by fixed smooth functions in the form
$$
\Phi^{-}\le \chi_{>1} \le \Phi^{+}.
$$
For convenience, we define $\Phi^{+}$ and $\Phi^{-}$ on whole of $\mathbb{R}$ and assume that these functions are even with values in $[0,1]$, increase monotonically in $\mathbb{R}^+$ and satisfy  
$$
\Phi^-(x) = \begin{cases}
0 & \mbox{ if } |x|\le 1\\
1 & \mbox{ if } |x|\ge 2
\end{cases}
$$ 
and 
$$
\Phi^+(x) = \begin{cases}
0 & \mbox{ if } |x|\le 1/2\\
1 & \mbox{ if } |x|\ge 1.
\end{cases}
$$
Hence, 
\begin{equation} \label{sandwich}
S_1^{\sharp,-}\le S_1^{\sharp} \le S_1^{\sharp,+},
\end{equation}
where
$$
S_1^{\sharp,\pm}:= \sum\limits_{\substack{a_1,a_2\in \mathbb{N}\\ (a_1a_2,2)=1}}\sum\limits_{\substack{a\in \mathbb{Z}\\ a_1|a \\ a_2|a\\ a|a_1a_2}} \sum\limits_{\substack{c\in \mathbb{Z}\\ a_1|n-2c\\ a_2|n+2c \\ \left(a/a_1,(n-2c)/a_1\right)=1\\ \left(a/a_2,(n+2c)/a_2\right)=1}} 
\Phi_1(2c)\Phi_2\left(\frac{a}{2}+\frac{n^2-4c^2}{2a}\right)\Phi_3\left(\frac{a}{2}-\frac{n^2-4c^2}{2a}\right)\Phi^{\pm}\left(\frac{a_1}{a_2}\right).
$$
Again, we deal only with $S_1^{\sharp,-}$ since the treatment of $S_1^{\sharp,+}$ will be similar and leads to the same asymptotic, giving an asymptotic for $S_1^{\sharp}$. Similarly as above, $S_1^{\flat}$ can be bounded in the form
$$
S_1^{\flat,-}\le S_1^{\flat} \le S_1^{\flat,+},
$$
and $S_1^{\flat,\pm}$ can be treated along essentially the same lines as $S_1^{\sharp,\pm}$.

To simplify the situation, we take out a common factor from $a_1$ and $a_2$, writing
$$
a_1=d_1e, \quad a_2=d_2e, \quad e=(a_1,a_2).
$$
Then $(d_1,d_2)=1$. We also note that we then have
$$
n-2c\equiv 0 \bmod{e}, \quad n+2c\equiv 0\bmod{e}
$$
which forces $e|n$ and $e|c$. We may therefore write $c=ef$ and get
\begin{equation*}
\begin{split}
S_1^{\sharp,-}= & \sum\limits_{\substack{e\in \mathbb{N}\\ e|n}} \sum\limits_{\substack{d_1,d_2\in \mathbb{N}\\ (d_1,d_2)=1\\ (d_1d_2,2)=1}} \sum\limits_{\substack{a\in \mathbb{Z}\\ d_1e|a \\ d_2e|a\\ a|d_1d_2e^2}} \sum\limits_{\substack{f\in \mathbb{Z}\\ n/e-2f\equiv 0\bmod{d_1}\\ n/e+2f\equiv 0\bmod{d_2}\\ \left(\frac{n/e-2f}{d_1},\frac{a}{d_1e}\right)=1\\ \left(\frac{n/e+2f}{d_2},\frac{a}{d_2e}\right)=1}} 
\Phi_1(2ef)\Phi_2\left(\frac{a}{2}+\frac{n^2-4e^2f^2}{2a}\right)\Phi_3\left(\frac{a}{2}-\frac{n^2-4e^2f^2}{2a}\right)\Phi^{-}\left(\frac{d_1}{d_2}\right). 
\end{split}
\end{equation*}
Now the coprimality condition $(d_1,d_2)=1$ and the summation conditions on $a$ force $a$ to be of the form $a=d_1d_2eg$, where $g|e$. We therefore get
\begin{equation*}
\begin{split}
S_1^{\sharp,-}= & \sum\limits_{\substack{e\in \mathbb{N}\\ e|n}} \sum\limits_{\substack{g\in \mathbb{Z}\\ g|e}}\sum\limits_{\substack{d_1,d_2\in \mathbb{N}\\ (d_1,d_2)=1\\ (d_1d_2,2)=1}} \sum\limits_{\substack{f\in \mathbb{Z}\\ n/e-2f\equiv 0\bmod{d_1}\\ n/e+2f\equiv 0\bmod{d_2}\\ \left(\frac{n/e-2f}{d_1},d_2g\right)=1\\ \left(\frac{n/e+2f}{d_2},d_1g\right)=1}} 
\Phi_1(2ef)\Phi_2\left(\frac{d_1d_2eg}{2}+\frac{n^2-4e^2f^2}{2d_1d_2eg}\right)\Phi_3\left(\frac{d_1d_2eg}{2}-\frac{n^2-4e^2f^2}{2d_1d_2eg}\right)\Phi^{-}\left(\frac{d_1}{d_2}\right). 
\end{split}
\end{equation*}

Next, we remove the coprimality conditions in the last sum using M\"obius inversion, getting
\begin{equation} \label{afterrem}
\begin{split}
S_1^{\sharp,-}= & \sum\limits_{\substack{e\in \mathbb{N}\\ e|n}} \sum\limits_{\substack{g\in \mathbb{Z}\\ g|e}}\sum\limits_{\substack{d_1,d_2\in \mathbb{N}\\ (d_1,d_2)=1\\ (d_1d_2,2)=1}} \sum\limits_{\substack{h_1,h_2\in \mathbb{N}\\ h_1|d_2g\\ h_2|d_1g}}\mu(h_1)\mu(h_2)\sum\limits_{\substack{f\in \mathbb{Z}\\ n/e-2f\equiv 0\bmod{d_1h_1}\\ n/e+2f\equiv 0\bmod{d_2h_2}}} 
\Phi_1(2ef)\Phi_2\left(\frac{d_1d_2eg}{2}+\frac{n^2-4e^2f^2}{2d_1d_2eg}\right)\times\\ & \Phi_3\left(\frac{d_1d_2eg}{2}-\frac{n^2-4e^2f^2}{2d_1d_2eg}\right)\Phi^{-}\left(\frac{d_1}{d_2}\right). 
\end{split}
\end{equation}
We write the first congruence in the last sum in the form
$$
2f=n/e+kd_1h_1,
$$
which forces $k$ to be odd. Then the second congruence is equivalent to
$$
2n/e+kd_1h_1\equiv 0 \bmod{d_2h_2}.
$$ 
Let 
$$
j=(d_1h_1,d_2h_2). 
$$
Then we have
$$
\left(\frac{d_1h_1}{j}, \frac{d_2h_2}{j}\right)=1.
$$
Moreover, the above congruence forces $j|n/e$ and can therefore be turned into
$$
k \equiv -\frac{2n}{ej} \cdot \overline{\frac{d_1h_1}{j}} \bmod{\frac{d_2h_2}{j}}.
$$
Thus we obtain
\begin{equation*}
\begin{split}
S_1^{\sharp,-}= & \sum\limits_{\substack{e\in \mathbb{N}\\ e|n}} \sum\limits_{\substack{g\in \mathbb{Z}\\ g|e}} \sum\limits_{\substack{j\in \mathbb{N}\\ j|n/e}} \sum\limits_{\substack{h_1,h_2\in \mathbb{N}\\ (h_1h_2,2)=1}}\mu(h_1)\mu(h_2)\sum\limits_{\substack{d_1,d_2\in \mathbb{N}\\ (d_1,d_2)=1\\ (d_1d_2,2)=1\\ h_1|d_2g\\ h_2|d_1g\\ (d_1h_1,d_2h_2)=j}} \sum\limits_{\substack{k\in \mathbb{Z}\\ (k,2)=1\\ k \equiv -\frac{2n}{ej} \cdot \overline{\frac{d_1h_1}{j}} \bmod{\frac{d_2h_2}{j}}}} 
\Phi_1\left(e\left(\frac{n}{e}+kd_1h_1\right)\right)\times\\ & \Phi_2\left(\frac{d_1d_2eg}{2}+\frac{n^2-e^2\left(n/e+kd_1h_1\right)^2}{2d_1d_2eg}\right)\Phi_3\left(\frac{d_1d_2eg}{2}-\frac{n^2-e^2\left(n/e+kd_1h_1\right)^2}{2d_1d_2eg}\right)\Phi^{-}\left(\frac{d_1}{d_2}\right)\\
= & \sum\limits_{\substack{e\in \mathbb{N}\\ e|n}} \sum\limits_{\substack{g\in \mathbb{Z}\\ g|e}} \sum\limits_{\substack{j\in \mathbb{N}\\ j|n/e}} \sum\limits_{\substack{h_1,h_2\in \mathbb{N}\\ (h_1h_2,2)=1}}\mu(h_1)\mu(h_2)\sum\limits_{\substack{d_1,d_2\in \mathbb{N}\\ (d_1,d_2)=1\\ (d_1d_2,2)=1\\ h_1|d_2g\\ h_2|d_1g\\ (d_1h_1,d_2h_2)=j}} \sum\limits_{\substack{k\in \mathbb{Z}\\ (k,2)=1\\ k \equiv -\frac{2n}{ej} \cdot \overline{\frac{d_1h_1}{j}} \bmod{\frac{d_2h_2}{j}}}} 
\Phi_1\left(n+kd_1h_1e\right)\times\\ & \Phi_2\left(\frac{d_1d_2eg}{2}-\frac{2nkh_1+k^2d_1h_1^2e}{2d_2g}\right)\Phi_3\left(\frac{d_1d_2eg}{2}+\frac{2nkh_1+k^2d_1h_1^2e}{2d_2g}\right)\Phi^{-}\left(\frac{d_1}{d_2}\right).
\end{split}
\end{equation*}
Alternatively, we could have written the second congruence in the last sum in \eqref{afterrem} in the form $2f=-n/e+kd_2h_2$ and plugged this into the first. This is favorable when dealing with $S_1^{\flat,\pm}$ and leads to similar contributions. In particular, the main term contributions will be the same as for $S_1^{\sharp,\pm}$.   

To be able to apply Poisson summation in two variables later on, we continue with taking out common factors, writing
$$
g_1=(h_1,g),\quad g_2=(h_2,g), \quad h_1=m_1g_1,\quad h_2=m_2g_2. 
$$
Then
$$
(m_1,g/g_1)=1, \quad (m_2,g/g_2)=1
$$
and the conditions
$$
h_1|d_2g, \quad  h_2|d_1g
$$
turn into 
$$
m_1|d_2, \quad m_2|d_1. 
$$
Let 
$$
d_2=q_2m_1, \quad d_1=q_1m_2.
$$
Then the condition 
$$
(d_1h_1,d_2h_2)=j
$$
turns into 
$$
(q_1m_2m_1g_1,q_2m_1m_2g_2)=j
$$
which implies 
$$
m_1m_2|j.
$$
Suppose 
$$
j=m_1m_2r.
$$
Then it follows that 
$$
(q_1g_1,q_2g_2)=r.
$$
Hence, we get
\begin{equation*}
\begin{split}
S_1^{\sharp,-}= & \sum\limits_{\substack{e\in \mathbb{N}\\ e|n}} \sum\limits_{\substack{g\in \mathbb{Z}\\ g|e}} \sum\limits_{\substack{g_1,g_2\in \mathbb{N}\\ g_1|g \\ g_2|g}} \sum\limits_{\substack{m_1,m_2\in \mathbb{N}\\ m_1m_2|n/e\\ (m_1,m_2)=1\\ (m_1,g/g_1)=1\\ (m_2,g/g_2)=1}} \mu(m_1g_1)\mu(m_2g_2)\sum\limits_{\substack{r\in \mathbb{N}\\ r|n/(em_1m_2)}} \sum\limits_{\substack{q_1,q_2\in \mathbb{N}\\ (q_1,q_2)=1\\ (q_1q_2,2)=1\\ (q_1g_1,q_2g_2)=r}}  \sum\limits_{\substack{k\in \mathbb{Z}\\ (k,2)=1\\ k \equiv -\frac{2n}{em_1m_2r} \cdot \overline{\frac{q_1g_1}{r}} \bmod{\frac{q_2g_2}{r}}}} 
\Phi_1\left(n+kq_1m_1m_2g_1e\right)\times\\ & \Phi_2\left(\frac{q_1q_2m_1m_2eg}{2}-\frac{2nkg_1+k^2q_1m_1m_2g_1^2e}{2q_2g}\right)\Phi_3\left(\frac{q_1q_2m_1m_2eg}{2}+\frac{2nkg_1+k^2q_1m_1m_2g_1^2e}{2q_2g}\right)\Phi^{-}\left(\frac{q_1m_2}{q_2m_1}\right).
\end{split}
\end{equation*}
Suppose further that 
$$
(g_1,g_2)=s, \quad g_1=t_1s, \quad g_2=t_2s. 
$$
Then 
$$
(t_1,t_2)=1
$$
and the condition 
$$
(q_1g_1,q_2g_2)=r
$$
turns into
$$
(q_1t_1,q_2t_2)=\frac{r}{s}=:u. 
$$
It follows that 
\begin{equation*}
\begin{split}
S_1^{\sharp,-}= & \sum\limits_{\substack{e\in \mathbb{N}\\ e|n}}  \sum\limits_{\substack{g\in \mathbb{Z}\\ g|e}} \sum\limits_{\substack{s\in \mathbb{N}\\ s|g}}   \sum\limits_{\substack{t_1,t_2\in \mathbb{N}\\ (t_1,t_2)=1\\ t_1|g/s \\ t_2|g/s}} \sum\limits_{\substack{u\in \mathbb{N}\\ u|n/(es)}} \sum\limits_{\substack{m_1,m_2\in \mathbb{N}\\ m_1m_2|n/(esu)\\ (m_1,m_2)=1\\ (m_1,g/(t_1s))=1\\ (m_2,g/(t_2s))=1}} \mu(m_1t_1s)\mu(m_2t_2s) \sum\limits_{\substack{q_1,q_2\in \mathbb{N}\\ (q_1,q_2)=1\\ (q_1q_2,2)=1\\ (q_1t_1,q_2t_2)=u}}  \sum\limits_{\substack{k\in \mathbb{Z}\\ (k,2)=1\\ k \equiv -\frac{2n}{em_1m_2su} \cdot \overline{\frac{q_1t_1}{u}} \bmod{\frac{q_2t_2}{u}}}} 
\Phi_1\left(n+kq_1m_1m_2t_1se\right)\times\\ & \Phi_2\left(\frac{q_1q_2m_1m_2eg}{2}-\frac{2nkt_1s+k^2q_1m_1m_2t_1^2s^2e}{2q_2g}\right)\Phi_3\left(\frac{q_1q_2m_1m_2eg}{2}+\frac{2nkt_1s+k^2q_1m_1m_2t_1^2s^2e}{2q_2g}\right)\Phi^{-}\left(\frac{q_1m_2}{q_2m_1}\right).
\end{split}
\end{equation*}

Finally, set 
$$
(q_1,t_2)=v_1, \quad (q_2,t_1)=v_2, \quad q_1=\alpha_1v_1, \quad t_2=\beta_2v_1, \quad q_2=\alpha_2v_2, \quad t_1=\beta_1v_2. 
$$
Then taking $(t_1,t_2)=1=(q_1,q_2)$ into account, we have 
$$
v_1v_2=u,\quad (v_1,v_2)=1,\quad  (\alpha_1\beta_1,\alpha_2\beta_2)=1, 
$$
and hence,
\begin{equation*}
\begin{split}
S_1^{\sharp,-}= & \sum\limits_{\substack{e\in \mathbb{N}\\ e|n}}  \sum\limits_{\substack{g\in \mathbb{Z}\\ g|e}} \sum\limits_{\substack{s\in \mathbb{N}\\ s|g}}   \sum\limits_{\substack{v_1,v_2\in \mathbb{N}\\ (v_1,v_2)=1\\ v_1|g/s \\ v_2|g/s\\
v_1v_2|n/(es)}}
\sum\limits_{\substack{\beta_1,\beta_2\in \mathbb{N}\\ (\beta_2v_1,\beta_1v_2)=1\\ 
\beta_2|g/(sv_1), \\ \beta_1|g/(sv_2)}}
 \sum\limits_{\substack{m_1,m_2\in \mathbb{N}\\ m_1m_2|n/(esv_1v_2)\\ (m_1,m_2)=1\\ (m_1,g/(\beta_1v_2s))=1\\ (m_2,g/(\beta_2v_1s))=1}} \mu(m_1\beta_1v_2s)\mu(m_2\beta_2v_1s) \times\\ & \sum\limits_{\substack{\alpha_1,\alpha_2\in \mathbb{N}\\ (\alpha_1\alpha_2,2)=1\\ (\alpha_1\beta_1,\alpha_2\beta_2)=1}}  \sum\limits_{\substack{k\in \mathbb{Z}\\ (k,2)=1\\ k \equiv -\frac{2n}{em_1m_2sv_1v_2} \cdot \overline{\alpha_1\beta_1} \bmod{\alpha_2\beta_2}}} 
\Phi_1\left(n+k\alpha_1\beta_1m_1m_2v_1v_2se\right)\Phi^{-}\left(\frac{\alpha_1 v_1m_2}{\alpha_2 v_2m_1}\right)\times\\ & \Phi_2\left(\frac{\alpha_1\alpha_2m_1m_2v_1v_2eg}{2}-\frac{2nk\beta_1s+k^2\alpha_1v_1m_1m_2\beta_1^2v_2s^2e}{2\alpha_2g}\right)\Phi_3\left(\frac{\alpha_1\alpha_2m_1m_2v_1v_2eg}{2}+\frac{2nk\beta_1s+k^2\alpha_1v_1m_1m_2\beta_1^2v_2s^2e}{2\alpha_2g}\right).
\end{split}
\end{equation*}
We still separate the double sum over $\alpha_1$ and $\alpha_2$ into two separate sums and replace the conditions $g\in \mathbb{Z}$ and $\alpha_1\in \mathbb{N}$ by $g\in \mathbb{N}$ and $\alpha_1\in \mathbb{Z}$, which does not change the sum since the contribution of $(-g,\alpha_1,k)$ equals the contribution of $(g,-\alpha_1,-k)$. Here we recall that we assumed the function $\Phi^{-}$ to be even. Thus we write
\begin{equation*}
\begin{split}
S_1^{\sharp,-}= & \sum\limits_{\substack{e\in \mathbb{N}\\ e|n}}  \sum\limits_{\substack{g\in \mathbb{N}\\ g|e}} \sum\limits_{\substack{s\in \mathbb{N}\\ s|g}}   \sum\limits_{\substack{v_1,v_2\in \mathbb{N}\\ (v_1,v_2)=1\\ v_1|g/s \\ v_2|g/s\\
v_1v_2|n/(es)}}
\sum\limits_{\substack{\beta_1,\beta_2\in \mathbb{N}\\ (\beta_2v_1,\beta_1v_2)=1\\ 
\beta_2|g/(sv_1) \\ \beta_1|g/(sv_2)}}
 \sum\limits_{\substack{m_1,m_2\in \mathbb{N}\\ m_1m_2|n/(esv_1v_2)\\ (m_1,m_2)=1\\ (m_1,g/(\beta_1v_2s))=1\\ (m_2,g/(\beta_2v_1s))=1}} \mu(m_1\beta_1v_2s)\mu(m_2\beta_2v_1s) \times\\ & \sum\limits_{\substack{\alpha_2\in \mathbb{N}\\ (\alpha_2,2\beta_1)=1}} \sum\limits_{\substack{\alpha_1\in \mathbb{Z}\\ (\alpha_1,2\alpha_2\beta_2)=1}} \sum\limits_{\substack{k\in \mathbb{Z}\\ (k,2)=1\\ k \equiv -\frac{2n}{em_1m_2sv_1v_2} \cdot \overline{\alpha_1\beta_1} \bmod{\alpha_2\beta_2}}} 
\Phi_1\left(n+k\alpha_1\beta_1m_1m_2v_1v_2se\right)\Phi^{-}\left(\frac{\alpha_1 v_1m_2}{\alpha_2 v_2m_1}\right)\times\\ & \Phi_2\left(\frac{\alpha_1\alpha_2m_1m_2v_1v_2eg}{2}-\frac{2nk\beta_1s+k^2\alpha_1v_1m_1m_2\beta_1^2v_2s^2e}{2\alpha_2g}\right)\Phi_3\left(\frac{\alpha_1\alpha_2m_1m_2v_1v_2eg}{2}+\frac{2nk\beta_1s+k^2\alpha_1v_1m_1m_2\beta_1^2v_2s^2e}{2\alpha_2g}\right).
\end{split}
\end{equation*}

\section{Poisson summation} \label{poisson}
Now we perform Poisson summation over $k$ and $\alpha_1$. To simplify matters, we set
\begin{equation} \label{ABCDEdef}
A:=\beta_1m_1m_2v_1v_2se, \quad B:=m_1m_2v_1v_2eg, \quad C:=\beta_1s, \quad  D:=m_1m_2v_1v_2\beta_1^2s^2e, \quad E:=\frac{2n}{m_1m_2v_1v_2se}.
\end{equation}
Then the double sum over $\alpha_1$ and $k$ becomes
\begin{equation*}
\begin{split}
T= & \sum\limits_{\substack{\alpha_1\in \mathbb{Z}\\ (\alpha_1,2\alpha_2\beta_2)=1}} \sum\limits_{\substack{k\in \mathbb{Z}\\ (k,2)=1\\ k \equiv - E  \overline{\alpha_1\beta_1} \bmod{\alpha_2\beta_2}}} 
\Phi_1\left(n+k\alpha_1A\right) \Phi_2\left(\frac{\alpha_1\alpha_2B}{2}-\frac{2nkC+k^2\alpha_1D}{2g\alpha_2}\right)\times\\ & \Phi_3\left(\frac{\alpha_1\alpha_2B}{2}+\frac{2nkC+k^2\alpha_1D}{2g\alpha_2}\right)\Phi^{-}\left(\frac{\alpha_1 v_1m_2}{\alpha_2 v_2m_1}\right).
\end{split}
\end{equation*}
We divide this into four sums, 
$$
T=\sum\limits_{\mu=1}^2\sum\limits_{\nu=1}^2 (-1)^{\mu+\nu}T_{\mu,\nu},
$$
where 
\begin{equation*} 
\begin{split}
T_{\mu,\nu}:= & \sum\limits_{\substack{\alpha_1\in \mathbb{Z}\\ (\alpha_1,\alpha_2\beta_2)=1}} \sum\limits_{\substack{k\in \mathbb{Z}\\ k \equiv -  E\overline{\mu\nu\alpha_1\beta_1} \bmod{\alpha_2\beta_2}}} 
\Phi_1\left(n+ k\alpha_1\mu\nu A\right) \Phi_2\left(\frac{\alpha_1\alpha_2\mu B}{2}-\frac{2nk\nu C+k^2\alpha_1\mu\nu^2 D}{2g\alpha_2}\right)\times\\ & \Phi_3\left(\frac{\alpha_1\alpha_2\mu B}{2}+\frac{2nk\nu C+k^2\alpha_1\mu \nu^2 D}{2g\alpha_2}\right)\Phi^{-}\left(\frac{\alpha_1 \mu v_1m_2}{\alpha_2 v_2m_1}\right).
\end{split}
\end{equation*}
We write 
\begin{equation} \label{Tsum2}
T_{\nu,\mu}=\sum\limits_{\substack{\alpha_1\in \mathbb{Z}\\ (\alpha_1,\alpha_2\beta_2)=1}} \sum\limits_{\substack{k\in \mathbb{Z}\\ k \equiv - E  \overline{\mu\nu\alpha_1\beta_1} \bmod{\alpha_2\beta_2}}} \Phi_{\mu,\nu}(\alpha_1,k),
\end{equation}
where
\begin{equation} \label{functiondef}
\Phi_{\mu,\nu}(x,y):=
\Phi_1\left(F_1(x,y)\right) \Phi_2\left(F_2(x,y)\right)\Phi_3\left(F_3(x,y)\right)\Phi^{-}\left(F_4(x)\right)
\end{equation}
with
$$
F_1(x,y):=n+\mu\nu Axy, \quad F_2(x,y):=\frac{\alpha_2\mu Bx}{2}-\frac{2n\nu Cy+\mu \nu^2  Dxy^2}{2g\alpha_2}, \quad F_3(x,y):=\frac{\alpha_2\mu Bx}{2}+\frac{2n\nu Cy+\mu\nu^2 Dxy^2}{2g\alpha_2}
$$
and 
$$
F_4(x):=\frac{\mu v_1m_2x}{\alpha_2 v_2m_1}.
$$

Poisson summation for the sum over $k$ gives
\begin{equation*}
\begin{split}
T_{\mu,\nu}= & \frac{1}{\alpha_2\beta_2} \sum\limits_{\substack{\alpha_1\in \mathbb{Z}\\ (\alpha_1,\alpha_2\beta_2)=1}} \sum\limits_{l\in \mathbb{Z}} e\left(-\frac{l E\overline{\alpha_1\mu\nu \beta_1}}{\alpha_2\beta_2}\right) \int\limits_{\mathbb{R}}\Phi_{\mu,\nu}\left(\alpha_1,y\right) e\left(-\frac{ly}{\alpha_2\beta_2}\right)dy\\
= &  \frac{1}{\alpha_2\beta_2}\sum\limits_{l\in \mathbb{Z}} \sum\limits_{\substack{\gamma \bmod{\alpha_2\beta_2}\\ (\gamma,\alpha_2\beta_2)=1}} e\left(-\frac{l E\overline{\gamma\mu\nu \beta_1}}{\alpha_2\beta_2}\right) \int\limits_{\mathbb{R}} \left(\sum\limits_{\alpha_1\equiv \gamma \bmod{\alpha_2\beta_2}} \Phi_{\mu,\nu}\left(\alpha_1,y\right)\right) e\left(-\frac{ly}{\alpha_2\beta_2}\right)dy.
\end{split}
\end{equation*}
Now Poisson summation for the sum over $\alpha_1$ gives
\begin{equation*}
\begin{split}
T_{\mu,\nu}= &  \frac{1}{(\alpha_2\beta_2)^2}\sum\limits_{l\in \mathbb{Z}} \sum\limits_{\substack{\gamma \bmod{\alpha_2\beta_2}\\ (\gamma,\alpha_2\beta_2)=1}} e\left(-\frac{lE\overline{\gamma\mu\nu\beta_1}}{\alpha_2\beta_2}\right) \int\limits_{\mathbb{R}} \left(\sum\limits_{w\in \mathbb{Z}}e\left(\frac{w\gamma}{\alpha_2\beta_2}\right) \int\limits_{\mathbb{R}}\Phi_{\mu,\nu}\left(x,y\right) e\left(-\frac{wx+ly}{\alpha_2\beta_2}\right)dx \right) dy\\
= & \frac{1}{(\alpha_2\beta_2)^2}\sum\limits_{w\in \mathbb{Z}}\sum\limits_{l\in \mathbb{Z}}  S(w,-lE\overline{\mu\nu\beta_1};\alpha_2\beta_2) I_{\mu,\nu}(w,l;\alpha_2\beta_2),
\end{split}
\end{equation*}
where 
$$
S(w,-lE\overline{\mu\nu\beta_1};\alpha_2\beta_2)=\sum\limits_{\substack{\gamma \bmod{\alpha_2\beta_2}\\ (\gamma,\alpha_2\beta_2)=1}} e\left(\frac{w\gamma-l E\overline{\mu\nu\beta_1\gamma}}{\alpha_2\beta_2}\right)
$$
is a Kloosterman sum and 
\begin{equation} \label{Fourierintegraldef}
I_{\mu,\nu}(w,l;\alpha_2\beta_2):= \int\limits_{\mathbb{R}}\int\limits_{\mathbb{R}} \Phi_{\mu,\nu}\left(x,y\right) e\left(-\frac{wx+ly}{\alpha_2\beta_2}\right) dxdy
\end{equation}
a Fourier integral. We keep in mind that $\Phi^{-}(F_4(x))=0$ if 
$$
\alpha_2v_2m_1\ge \mu v_1m_2|x|
$$
by definition of $\Phi^{-}$. Hence, only $x$ such that 
\begin{equation} \label{alphacond}
\alpha_2\le \frac{\mu v_1m_2|x|}{v_2m_1}
\end{equation}
contribute to the integral. 

\section{Splitting into main and error terms}
We further split $T_{\mu,\nu}$ into 
\begin{equation} \label{split}
T_{\mu,\nu}=\sum\limits_{i=0}^1\sum\limits_{j=0}^1 T_{\mu,\nu}^{i,j},
\end{equation}
where 
\begin{equation} \label{T00}
T_{\mu,\nu}^{0,0}:= \frac{1}{(\alpha_2\beta_2)^2} S(0,0;\alpha_2\beta_2) I_{\mu,\nu}(0,0;\alpha_2\beta_2)
=  \frac{\varphi(\alpha_2\beta_2)}{(\alpha_2\beta_2)^2}\cdot  I_{\mu,\nu}(0,0;\alpha_2\beta_2)dxdy,
\end{equation}
\begin{equation} \label{T01}
\begin{split}
T_{\mu,\nu}^{0,1}:= & \frac{1}{(\alpha_2\beta_2)^2}\sum\limits_{l\in \mathbb{Z}\setminus\{0\}}  S(0,-l E\overline{\mu\nu\beta_1};\alpha_2\beta_2) I_{\mu,\nu}(0,l;\alpha_2\beta_2)\\
= & \frac{\varphi(\alpha_2\beta_2)}{(\alpha_2\beta_2)^2}\sum\limits_{l\in \mathbb{Z}\setminus \{0\}}  
\frac{\mu(\alpha_2\beta_2/(lE,\alpha_2\beta_2))}{\varphi(\alpha_2\beta_2/(lE,\alpha_2\beta_2))}\cdot \ I_{\mu,\nu}(0,l;\alpha_2\beta_2),
\end{split}
\end{equation}
\begin{equation} \label{T10}
\begin{split}
T_{\mu,\nu}^{1,0}:= & \frac{1}{(\alpha_2\beta_2)^2}\sum\limits_{w\in \mathbb{Z}\setminus \{0\}} S(w,0;\alpha_2\beta_2) I_{\mu,\nu}(w,0;\alpha_2\beta_2)\\ = & \frac{\varphi(\alpha_2\beta_2)}{(\alpha_2\beta_2)^2}\sum\limits_{w\in \mathbb{Z}\setminus\{0\}}  
\frac{\mu(\alpha_2\beta_2/(w,\alpha_2\beta_2))}{\varphi(\alpha_2\beta_2/(w,\alpha_2\beta_2)}\cdot \ I_{\mu,\nu}(w,0;\alpha_2\beta_2)
\end{split}
\end{equation}
and 
\begin{equation} \label{T11}
T_{\mu,\nu}^{1,1}:=\frac{1}{(\alpha_2\beta_2)^2}\sum\limits_{w\in \mathbb{Z}\setminus\{0\}}\sum\limits_{l\in \mathbb{Z}\setminus\{0\}}  S(w,-l E\overline{\mu\nu\beta_1};\alpha_2\beta_2) I_{\mu,\nu}(w,l;\alpha_2\beta_2),
\end{equation}
where we use the properties of Ramanujan sums for $T_{\mu,\nu}^{0,1}$ and $T_{\mu,\nu}^{1,0}$. The term $T_{\mu,\nu}^{0,0}$ is the main term contribution. The three remaining terms contribute to the error. 

\section{Computation of $I_{\mu,\nu}(0,0;\alpha_2\beta_2)$}  \label{intcomp}
Now we recall the conditions on the weight functions $\Phi_1$, $\Phi_2$, $\Phi_3$ and $\Phi^{-}$ set at the beginning of section \ref{ini} and calculate the integral $I_{\mu,\nu}(0,0;\alpha_2\beta_2)$. 
Let 
\begin{equation*}
\begin{split}
\mathcal{V}_1:= & \left\{(x,y)\in \mathbb{R}^2: Y/2\le F_1(x,y)\le Y, \ 0\le F_2(x,y)\le 2X, \
M\le F_3(x,y)\le 2M\right\},\\
\mathcal{V}_2:= &\left\{(x,y)\in \mathbb{R}^2: 0\le F_1(x,y)\le 2X, \ Y/2\le F_2(x,y) \le Y, \
M\le F_3(x,y) \le 2M\right\},\\
\mathcal{V}_3:= & \left\{(x,y)\in \mathbb{R}^2: X\le F_1(x,y) \le 2X, \ 0\le F_2(x,y) \le 2X, \
M\le F_3(x,y)\le 2M\right\},\\
\mathcal{V}_4:= & \left\{(x,y)\in \mathbb{R}^2: 0\le F_1(x,y) \le 2X, \ X\le F_2(x,y) \le 2X, \
M\le F_3(x,y) \le 2M\right\},\\
\mathcal{V}_5:= & \left\{(x,y)\in \mathbb{R}^2: 0\le F_1(x,y) \le 2X, \ 0\le F_2(x,y) \le 2X, \
M\le F_3(x,y) \le 2M, \ |x|\le 2\alpha_2 v_2m_1/(\mu v_1m_2)\right\},\\
\mathcal{V}:= & \left\{(x,y)\in \mathbb{R}^2: 0\le F_1(x,y) \le 2X, \ 0\le F_2(x,y) \le 2X, \
M\le F_3(x,y) \le 2M\right\}.
\end{split}
\end{equation*}
Then
$$
I_{\mu,\nu}(0,0;\alpha_2\beta_2)=\iint\limits_{\mathcal{V}} \Phi_3\left(F_3(x,y)\right)dxdy+
O\left(\mbox{Vol}(\mathcal{V}_1)+\mbox{Vol}(\mathcal{V}_2)+\mbox{Vol}(\mathcal{V}_3)+\mbox{Vol}(\mathcal{V}_4)+\mbox{Vol}(\mathcal{V}_5)\right). 
$$

Now we change variables in the form
$$
\sigma:=F_2(x,y) \quad \mbox{ and } \quad \tau:=F_3(x,y). 
$$
We aim to calculate $F_1(x,y)$ in terms of $\sigma$ and $\tau$. First, we observe that 
$$
x=\frac{\sigma+\tau}{\alpha_2\mu B}.
$$
It follows that
$$
\frac{\sigma+\tau}{2}+\frac{2n\nu Cy+\nu^2 D(\sigma+\tau)y^2/(\alpha_2B)}{2g\alpha_2}=\tau.
$$
Using the definitions of $B,C,D$, this is equivalent to
$$
\frac{\sigma-\tau}{2}+\frac{2n\nu\beta_1s y+\nu^2\beta_1^2s^2 (\sigma+\tau)y^2/(g\alpha_2)}{2g\alpha_2}=0.
$$
Multiplying by $2(g\alpha_2)^2$, this turns into a quadratic equation
$$
\nu^2\beta_1^2s^2 (\sigma+\tau)y^2+2n\nu\beta_1sg\alpha_2 y+(\sigma-\tau)(g\alpha_2)^2=0
$$
in the variable $y$, which has a discriminant of
$$
\Delta=(2n\nu\beta_1sg\alpha_2)^2-4\nu^2\beta_1^2s^2 (\sigma+\tau)(\sigma-\tau)(g\alpha_2)^2=(2\nu\beta_1sg\alpha_2)^2(n^2-\sigma^2+\tau^2)
$$
and solutions
$$
y=\frac{-2n\nu\beta_1sg\alpha_2\pm \sqrt{(2\nu\beta_1sg\alpha_2)^2(n^2-\sigma^2+\tau^2)}}{2\nu^2\beta_1^2s^2 (\sigma+\tau)}=\frac{g\alpha_2(-n\pm\sqrt{n^2-\sigma^2+\tau^2})}{\nu\beta_1s(\sigma+\tau)},
$$
provided that 
$$
(\sigma,\tau)\in \mathcal{W}:= \{(\sigma,\tau)\in \mathbb{R}^2 : n^2-\sigma^2+\tau^2\ge 0\}.
$$
In this case, we obtain 
\begin{equation} \label{F1calc}
F_1(x,y)=n+\mu\nu Axy=n+\mu\nu A\cdot \frac{\sigma+\tau}{\alpha_2\mu B}\cdot \frac{g\alpha_2(-n\pm\sqrt{n^2-\sigma^2+\tau^2})}{\nu\beta_1s(\sigma+\tau)}
=\pm\sqrt{n^2-\sigma^2+\tau^2}.
\end{equation}
Since $F_1(x,y)\ge 0$ in our setting, we have 
\begin{equation} \label{F1calc1}
F_1(x,y)=\sqrt{n^2-\sigma^2+\tau^2}
\end{equation}
and only the solution
\begin{equation}\label{ysol}
y=\frac{g\alpha_2(-n+\sqrt{n^2-\sigma^2+\tau^2})}{\nu\beta_1s(\sigma+\tau)}.
\end{equation}

Now for any region $\mathcal{R}\in \mathbb{R}^2$ and smooth function $\Phi :\mathcal{R}\rightarrow \mathbb{R}^2$, we have, by the general substitution rule, 
$$
\iint\limits_{\Psi^{-1}(\mathcal{R})} \Phi(x,y)dxdy=\iint\limits_{\mathcal{R}} \Phi(\Psi^{-1}(\sigma,\tau)) \det(\mbox{Jac }\Psi^{-1})(\sigma,\tau)d\sigma d\tau,
$$
where 
$$
\Psi(x,y)=(\sigma,\tau).
$$
We here consider $\Phi(x,y)\equiv 1 \mbox{ or } \Phi(x,y)=\Phi_3(F_3(x,y))$
and $\mathcal{R}=\mathcal{U}_1,...,\mathcal{U}_5,\mathcal{U}$, where  
\begin{equation*}
\begin{split}
\mathcal{U}_1 & :=\{(\sigma,\tau)\in \mathcal{W} : Y/2\le \sqrt{n^2-\sigma^2+\tau^2}\le Y,\ 0\le \sigma\le 2X,\ M\le \tau\le 2M\},\\
\mathcal{U}_2 & :=\{(\sigma,\tau)\in \mathcal{W} : \sqrt{n^2-\sigma^2+\tau^2}\le 2X,\ Y/2\le \sigma\le Y,\ M\le \tau\le 2M\},\\
\mathcal{U}_3 & :=\{(\sigma,\tau)\in \mathcal{W} : X\le \sqrt{n^2-\sigma^2+\tau^2}\le 2X,\ 0\le \sigma\le 2X,\ M\le \tau\le 2M\},\\
\mathcal{U}_4 & :=\{(\sigma,\tau)\in \mathcal{W} : \sqrt{n^2-\sigma^2+\tau^2}\le 2X,\ X\le \sigma\le 2X,\ M\le \tau\le 2M\},\\
\mathcal{U}_5 & :=\{(\sigma,\tau)\in \mathcal{W} : \sqrt{n^2-\sigma^2+\tau^2}\le 2X,\ 0\le \sigma\le 2X,\ M\le \tau\le 2M, \ \sigma+\tau\le 2\alpha_2^2v_2^2m_1^2eg\},\\
\mathcal{U} & :=\{(\sigma,\tau)\in \mathcal{W} : \sqrt{n^2-\sigma^2+\tau^2}\le 2X,\ \sigma\le 2X,\ M\le \tau\le 2M\}
\end{split}
\end{equation*}
so that $\mathcal{V}_i=\Psi^{-1}(\mathcal{U}_i)$ for $i=1,...,5$, $\mathcal{V}=\Psi^{-1}(\mathcal{U})$ and hence
$$
\mbox{Vol}(\mathcal{V}_i)=\iint\limits_{\mathcal{U}_i} \det(\mbox{Jac }\Psi^{-1})(\sigma,\tau)d\sigma d\tau
$$
and 
\begin{equation*}
\iint\limits_{\mathcal{V}} \Phi_3(F_3(x,y))dxdy= \iint\limits_{\mathcal{U}} \Phi_3(\tau)
\det(\mbox{Jac }\Psi^{-1})(\sigma,\tau)d\sigma d\tau.
\end{equation*}
At this point, we recall that only $x$ satisfying \eqref{alphacond} 
contribute to the integral $I_{\mu,\nu}(0,0;\alpha_2\beta_2)$. After the above change of variables, this translates into saying that only $\sigma,\tau$ satisfying 
$$
\alpha_2\le \frac{\sqrt{\sigma+\tau}}{v_2m_1\sqrt{eg}}
$$
contribute. Since $\sigma+\tau\le 2X+2M\le 4X$, this sets a general restriction on $\alpha_2$, namely
\begin{equation} \label{alpha2restrict}
\alpha_2\le \frac{2\sqrt{X}}{v_2m_1\sqrt{eg}}. 
\end{equation}

Now we calculate the Jacobian. We have 
\begin{equation*}
\begin{split}
\det(\mbox{Jac }\Psi^{-1})(\sigma,\tau)= & \det(\mbox{Jac } \Psi)(x,y)^{-1}=
\begin{vmatrix} \partial_x F_2(x,y) & \partial_y F_2(x,y)\\ \partial_x F_3(x,y) & \partial_y F_3(x,y) \end{vmatrix}^{-1}\\
= & \begin{vmatrix}  \frac{\alpha_2\mu B}{2}-\frac{\mu \nu^2  Dy^2}{2g\alpha_2} & & -\frac{n\nu C+\mu \nu^2  Dxy}{g\alpha_2}\\  \\ \frac{\alpha_2\mu B}{2}+\frac{\mu \nu^2  Dy^2}{2g\alpha_2}&  & \frac{n\nu C+\mu \nu^2  Dxy}{g\alpha_2}\end{vmatrix}^{-1}\\
= & g(n\mu\nu BC+\mu^2\nu^2  BDxy)^{-1}\\ 
= & \left(G\sqrt{n^2-\sigma^2+\tau^2}\right)^{-1}
\end{split}
\end{equation*}
with 
\begin{equation} \label{Gdef}
G:=\mu\nu m_1m_2v_1v_2\beta_1se,
\end{equation}
where we use \eqref{F1calc} and \eqref{F1calc1}. In summary, we get
$$
\mbox{Vol}(\mathcal{V}_i)=\frac{1}{G} \iint\limits_{\mathcal{U}_i} \frac{d\sigma d\tau}{\sqrt{n^2-\sigma^2+\tau^2}}  \mbox{ for } i=1,2,3,4
$$
and 
\begin{equation} \label{mainintegral}
\iint\limits_{\mathcal{V}} \Phi_3(F_3(x,y))dxydy = \frac{1}{G}  \iint\limits_{\mathcal{U}} \frac{\Phi_3(\tau)}{\sqrt{n^2-\sigma^2+\tau^2}}
d\sigma d\tau. 
\end{equation}

It is easily calculated that 
\begin{equation} \label{easycalc}
\frac{1}{G}\iint\limits_{\mathcal{U}_{1,2}} \frac{d\sigma d\tau}{\sqrt{n^2-\sigma^2+\tau^2}}\ll \frac{MY}{Gn}
\end{equation}
and $\mathcal{U}_{3,4}=\emptyset$. 
The integral on the right-hand side of \eqref{mainintegral} was exactly calculated in \cite[integral $I(F)$ on page 162]{FrIw} and it turns out that
\begin{equation} \label{Uintegral}
\frac{1}{G} \iint\limits_{\mathcal{U}} \frac{\Phi_3(\tau)}{\sqrt{n^2-\sigma^2+\tau^2}}d\sigma d\tau=
\frac{2\pi \hat{\Phi}_3(0)}{G}=\frac{2\pi M \hat{\Phi}(0)}{G}. 
\end{equation}

Along similar lines as \eqref{easycalc}, we find that
$$
\frac{1}{G}\iint\limits_{\mathcal{U}_{5}} \frac{d\sigma d\tau}{\sqrt{n^2-\sigma^2+\tau^2}}\ll \frac{\alpha_2^2v_2^2m_1^2egM}{Gn},
$$
where we use that $\sigma\le 2\alpha_2^2v_2^2m_1^2eg$ in the case of region $\mathcal{U}_5$. 
So altogether, we get
\begin{equation} \label{finalintegral}
I_{\mu,\nu}(0,0;\alpha_2\beta_2)=\frac{2\pi \hat{\Phi}(0)M}{G}+O\left(\frac{MY}{Gn}+\frac{\alpha_2^2v_2^2m_1^2egM}{Gn}\right).
\end{equation}
We note that for large $\alpha_2$ satisfying \eqref{alpha2restrict}, the second $O$-term is of comparable size as the main term. This, however, is not a problem since we will see that the most significant contribution to the main term comes from small $\alpha_2$.

\section{Evaluation of the main term}\label{themainterm}
The main term contribution to $S_1^{\sharp,-}$ comes from the term $T_{\mu,\nu}^{0,0}$ in \eqref{T00}. Recalling \eqref{alpha2restrict}, \eqref{Gdef} and \eqref{finalintegral}, this contribution takes the form
\begin{equation} \label{firststep}
\begin{split}
S_{1,0,0}^{\sharp,-}:= & \sum\limits_{\mu=1}^2 \sum\limits_{\nu=1}^2 (-1)^{\mu+\nu}\sum\limits_{\substack{e\in \mathbb{N}\\ e|n}}  \sum\limits_{\substack{g\in \mathbb{N}\\ g|e}} \sum\limits_{\substack{s\in \mathbb{N}\\ s|g}}   \sum\limits_{\substack{v_1,v_2\in \mathbb{N}\\ (v_1,v_2)=1\\ v_1|g/s \\ v_2|g/s\\
v_1v_2|n/(es)}}
\sum\limits_{\substack{\beta_1,\beta_2\in \mathbb{N}\\ (\beta_2v_1,\beta_1v_2)=1\\ 
\beta_2|g/(sv_1) \\ \beta_1|g/(sv_2)}}
 \sum\limits_{\substack{m_1,m_2\in \mathbb{N}\\ m_1m_2|n/(esv_1v_2)\\ (m_1,m_2)=1\\ (m_1,g/(\beta_1v_2s))=1\\ (m_2,g/(\beta_2v_1s))=1}} \mu(m_1\beta_1v_2s)\mu(m_2\beta_2v_1s) \times\\ & \sum\limits_{\substack{\alpha_2\in \mathbb{N}\\
\alpha_2\le 2\sqrt{X}/(v_2m_1\sqrt{eg})\\ (\alpha_2,2\beta_1)=1}} \frac{\varphi(\alpha_2\beta_2)}{(\alpha_2\beta_2)^2}\left(\frac{2\pi \hat{\Phi}(0)M}{\mu\nu m_1m_2v_1v_2\beta_1se}+O\left(\frac{MY}{m_1m_2v_1v_2\beta_1sen}\right)+O\left(
\frac{\alpha_2^2v_2m_1gM}{v_1m_2\beta_1s n}\right)\right).
\end{split}
\end{equation}
It will turn out that the contribution of the second $O$-term in \eqref{firststep} is by essentially a factor of logarithm smaller than the main term. The first $O$-term in \eqref{firststep} will give a small contribution if $Y$ is chosen sufficiently small. We will optimize $Y$ later. 

Just using $\varphi(\alpha_2\beta_2)\le \alpha_2\beta_2$, bounding the M\"obius function trivially and ignoring all coprimality conditions, our first $O$-term contribution is bounded by
\begin{equation*}
\begin{split}
\ll & \sum\limits_{e,g,s,v_1,v_2,\beta_1,\beta_2,m_1,m_2|n}  \sum\limits_{\substack{\alpha_2\in \mathbb{N}\\
\alpha_2\le 2\sqrt{X}/(v_2m_1\sqrt{eg})}} 
\frac{MY}{\alpha_2m_1m_2v_1v_2\beta_1\beta_2sen}\\
= & \sum\limits_{f,g,s,v_1,v_2,\beta_1,\beta_2,m_1,m_2|n}  \sum\limits_{\substack{\alpha_2\in \mathbb{N}\\
\alpha_2\le 2\sqrt{X}/(v_2m_1\sqrt{eg})}} 
\frac{MY}{\alpha_2m_1m_2v_1v_2\beta_1\beta_2sfgn}\ll MYn^{-1}(\log n)(\log\log n)^9,
\end{split}
\end{equation*}
where we write $e=fg$ and use the well-known bound
$$
\sum\limits_{d|n} \frac{1}{d} \ll \log\log n. 
$$
Similarly, our second $O$-term contribution is bounded by
\begin{equation} \label{similar}
\begin{split}
\sum\limits_{\substack{e,s,v_1,v_2,\beta_1,\beta_2,m_1,m_2|n\\ g|e}}  \sum\limits_{\substack{\alpha_2\in \mathbb{N}\\
\alpha_2\le 2\sqrt{X}/(v_2m_1\sqrt{eg})}} 
\frac{\alpha_2 v_2m_1gM}{v_1m_2\beta_1\beta_2s n} \ll & 
 \sum\limits_{\substack{e,s,v_1,v_2,\beta_1,\beta_2,m_1,m_2|n\\ g|e}} 
\frac{M}{m_1m_2v_1v_2\beta_1\beta_2se}\\ \ll & 
\sum\limits_{f,g,s,v_1,v_2,\beta_1,\beta_2,m_1,m_2|n}\frac{M}{m_1m_2v_1v_2\beta_1\beta_2sfg}\\ \ll &
M(\log \log n)^9.
\end{split}
\end{equation}
Thus, if
\begin{equation} \label{Mbound}
Y\le n(\log n)^{-1}, 
\end{equation}
which we want to assume throughout the sequel, we get
$$
S_{1,0,0}^{\sharp,-}=\mathcal{M}+O\left(M(\log\log n)^9\right),
$$ 
where 
\begin{equation} \label{mainterm}
\begin{split}
\mathcal{M} & := \sum\limits_{\mu=1}^2 \sum\limits_{\nu=1}^2 (-1)^{\mu+\nu}\sum\limits_{\substack{e\in \mathbb{N}\\ e|n}}  \sum\limits_{\substack{g\in \mathbb{N}\\ g|e}} \sum\limits_{\substack{s\in \mathbb{N}\\ s|g}}   \sum\limits_{\substack{v_1,v_2\in \mathbb{N}\\ (v_1,v_2)=1\\ v_1|g/s \\ v_2|g/s\\
v_1v_2|n/(es)}}
\sum\limits_{\substack{\beta_1,\beta_2\in \mathbb{N}\\ (\beta_2v_1,\beta_1v_2)=1\\ 
\beta_2|g/(sv_1) \\ \beta_1|g/(sv_2)}}
 \sum\limits_{\substack{m_1,m_2\in \mathbb{N}\\ m_1m_2|n/(esv_1v_2)\\ (m_1,m_2)=1\\ (m_1,g/(\beta_1v_2s))=1\\ (m_2,g/(\beta_2v_1s))=1}} \mu(m_1\beta_1v_2s)\mu(m_2\beta_2v_1s) \times\\ & \sum\limits_{\substack{\alpha_2\in \mathbb{N}\\
\alpha_2\le 2\sqrt{X}/(v_2m_1\sqrt{eg})\\ (\alpha_2,2\beta_1)=1}} \frac{\varphi(\alpha_2\beta_2)}{\alpha_2^2\beta_2}\cdot \frac{2\pi \hat{\Phi}(0)M}{\mu\nu m_1m_2v_1v_2\beta_1\beta_2se}.
\end{split}
\end{equation}
To evaluate the inner-most sum, we use Perron's formula to write
$$
\sum\limits_{\substack{\alpha_2\in \mathbb{N}\\
\alpha_2\le Z\\ (\alpha_2,2\beta_1)=1}} \frac{\varphi(\alpha_2\beta_2)}{\alpha_2^2\beta_2} = \frac{1}{2\pi i} \int\limits_{c-iT}^{c+iT} \left( \sum\limits_{\substack{\alpha_2\in \mathbb{N}\\ (\alpha_2,2\beta_1)=1}} \frac{\varphi(\alpha_2\beta_2)}{\alpha_2^{s+2} \beta_2}\right)\frac{Z^s}{s} ds + O\left(\frac{\log 2Z}{T}+Z^{-1}\right)
$$
if $Z\ge 1$, where $c:=1/\log 2Z$ and $T\ge 1$. If $(\beta_2,2\beta_1)=1$, then the above Dirichlet series can be written as an Euler product in the form
\begin{equation*}
\begin{split}
\sum\limits_{\substack{\alpha_2\in \mathbb{N}\\ (\alpha_2,2\beta_1)=1}} \frac{\varphi(\alpha_2\beta_2)}{\alpha_2^{s+2} \beta_2} = & \prod\limits_{p\nmid 2\beta_1\beta_2} \left(1+\sum\limits_{k=1}^{\infty} \frac{(p-1)/p}{p^{k(s+1)}}\right) \prod\limits_{p |\beta_2} \left(\sum\limits_{k=0}^{\infty} \frac{1}{p^{k(s+1)}}\right)
\cdot \frac{\varphi(\beta_2)}{\beta_2} \\
= & \prod\limits_{p\nmid 2\beta_1\beta_2} \left(1+\frac{p-1}{p\left(p^{s+1}-1\right)}\right)\prod\limits_{p |\beta_2} \left(1-p^{-(s+1)}\right)^{-1}\cdot \frac{\varphi(\beta_2)}{\beta_2}\\
= & \Pi_{\beta_1,\beta_2}(s+1) \zeta(s+1),
\end{split}
\end{equation*}
where
$$
\Pi_{\beta_1,\beta_2}(s):=\prod\limits_{p|2\beta_1} \left(1-p^{-s}\right) \prod\limits_{p|\beta_2} \left(1-p^{-1}\right) \prod\limits_{p\nmid 2\beta_1\beta_2} \left(1-p^{-(s+1)}\right).
$$
We note that $\Pi(s)$ is analytic for $\Re s>0$ and 
$$
\Pi_{\beta_1,\beta_2}(s)\ll \tilde{\Pi}_{\beta_1}(1/2):=\prod\limits_{p|2\beta_1} \left(1+p^{-1/2}\right) \quad \mbox{if } \Re s\ge 1/2.
$$
Now we proceed as usual by contour integration, shifting the line of integration to 
$\Re s= -1/2$. In this way, using the residue theorem, we get 
\begin{equation*}
\begin{split}
\frac{1}{2\pi i} \int\limits_{c-iT}^{c+iT} \left( \sum\limits_{\substack{\alpha_2\in \mathbb{N}\\ (\alpha_2,2\beta_1)=1}} \frac{\varphi(\alpha_2\beta_2)}{\alpha_2^{s+2} \beta_2}\right)\frac{Z^s}{s} ds
= & \mbox{Res}_{s=0} \frac{\Pi_{\beta_1,\beta_2}(s+1) \zeta(s+1) Z^s}{s} + \\ & \frac{1}{2\pi i} \left(\int\limits_{c-iT}^{-1/2-iT} + \int\limits_{-1/2-iT}^{-1/2+iT} 
\int\limits_{-1/2+iT}^{c+iT}\right)\Pi_{\beta_1,\beta_2}(s+1)\zeta(1+s)\frac{Z^s}{s}ds.
\end{split}
\end{equation*}
Using the convexity bound 
$$
\zeta(\sigma+iT) \ll T^{(1-\sigma)/2+\varepsilon}  \quad \mbox{for } 1/2\le \sigma\le 1
$$ 
and 
$$
\zeta(\sigma+iT)\ll T^{\varepsilon} \quad \mbox{for } \sigma>1,
$$
we conclude that 
\begin{equation*}
\begin{split}
\int\limits_{-1/2+iT}^{c+iT}\Pi_{\beta_1,\beta_2}(s+1)\zeta(1+s)\frac{Z^s}{s}ds \ll &
\tilde{\Pi}_{\beta_1}(1/2)T^{-1}\cdot \int\limits_{1/2}^{1+c}
Z^{\sigma-1} T^{\max\{0,(1-\sigma)/2\}+\varepsilon} d\sigma\\
 \ll & \tilde{\Pi}_{\beta_1}(1/2)T^{\varepsilon-1}  \cdot \left(1+ Z^{-1}T^{1/2}\int\limits_{1/2}^1 \left(ZT^{-1/2}\right)^{\sigma}d\sigma \right)\\ \ll & \tilde{\Pi}_{\beta_1}(1/2)T^{\varepsilon-1},
\end{split}
\end{equation*}
provided that $T\le Z^2$.  Similarly,
$$
\int\limits_{c-iT}^{-1/2-iT} \Pi_{\beta_1,\beta_2}(s+1)\zeta(1+s)\frac{Z^s}{s}ds\ll \tilde{\Pi}_{\beta_1}(1/2)T^{\varepsilon-1} 
$$
under the same condition. Using Cauchy-Schwarz and the second moment bound for the zeta function on the critical line, we get 
$$
\int\limits_{-1/2-iT}^{-1/2+iT} \Pi_{\beta_1,\beta_2}(s+1)\zeta(1+s)\frac{Z^s}{s}ds \ll \tilde{\Pi}_{\beta_1}(1/2)Z^{-1/2}T^{\varepsilon}.
$$
Choosing $T:=Z^2$, we therefore obtain
$$
\sum\limits_{\substack{\alpha_2\in \mathbb{N}\\
\alpha_2\le Z\\ (\alpha_2,2\beta_1)=1}} \frac{\varphi(\alpha_2\beta_2)}{\alpha_2^2\beta_2} = \mbox{Res}_{s=0} \frac{\Pi_{\beta_1,\beta_2}(s+1) \zeta(s+1) Z^s}{s}+ O\left(\tilde{\Pi}_{\beta_1}(1/2)Z^{\varepsilon-1/2}\right).
$$
We use the Laurent series expansions 
$$
\Pi_{\beta_1,\beta_2}(s+1):=\Pi_{\beta_1,\beta_2}(1)+s\Pi_{\beta_1,\beta_2}'(1)+...,
$$
$$
\zeta(s+1)=\frac{1}{s}+\gamma+...
$$
and 
$$
Z^s:=1+s\log Z+... 
$$
to get 
$$
\mbox{Res}_{s=0} \frac{\Pi_{\beta_1,\beta_2}(s+1) \zeta(s+1) Z^s}{s}= \Pi_{\beta_1,\beta_2}'(1)+ \Pi_{\beta_1,\beta_2}(1)(\log Z+\gamma). 
$$
Then we calculate the logarithmic derivative of $\Pi_{\beta_1,\beta_2}(s)$ to be
$$
\frac{\Pi_{\beta_1,\beta_2}'(s)}{\Pi_{\beta_1,\beta_2}(s)}=\sum\limits_{p|2\beta_1} \frac{p^{-s}\log p}{1-p^{-s}}+ \sum\limits_{p\nmid 2\beta_1\beta_2} \frac{p^{-(s+1)}\log p}{1-p^{-(s+1)}}=:\Sigma_{\beta_1,\beta_2}(s),
$$
which implies
$$
\frac{\Pi_{\beta_1,\beta_2}'(1)}{\Pi_{\beta_1,\beta_2}(1)}=\sum\limits_{p|2\beta_1} \frac{\log p}{p-1}+
\sum\limits_{p\nmid 2\beta_1\beta_2} \frac{\log p}{p^2-1}=\Sigma_{\beta_1,\beta_2}(1). 
$$
We conclude that
\begin{equation} \label{alphasum}
\sum\limits_{\substack{\alpha_2\in \mathbb{N}\\
\alpha_2\le Z\\ (\alpha_2,2\beta_1)=1}} \frac{\varphi(\alpha_2\beta_2)}{\alpha_2^2\beta_2} = \Pi_{\beta_1,\beta_2}(1)(\Sigma_{\beta_1,\beta_2}(1)+\gamma+\log Z)+ O\left(\tilde{\Pi}_{\beta_1}(1/2)Z^{\varepsilon-1/2}\right)
\end{equation}
with 
$$
\Pi_{\beta_1,\beta_2}(1)=\prod\limits_{p|2\beta_1\beta_2} \left(1-p^{-1}\right) \prod\limits_{p\nmid 2\beta_1\beta_2} \left(1-p^{-2}\right)=
\frac{1}{\zeta(2)} \prod\limits_{p|2\beta_1\beta_2} \left(1+p^{-1}\right)^{-1}.
$$
Plugging \eqref{alphasum} into \eqref{mainterm}, and bounding the $O$-term contribution using the bound $d(k)\ll k^{\varepsilon}$ for the divisor function (which we shall employ frequently in this article), we get
\begin{equation} \label{Min} 
\begin{split}
\mathcal{M} = & \sum\limits_{\mu=1}^2 \sum\limits_{\nu=1}^2 (-1)^{\mu+\nu}\sum\limits_{\substack{e\in \mathbb{N}\\ e|n}}  \sum\limits_{\substack{g\in \mathbb{N}\\ g|e}} \sum\limits_{\substack{s\in \mathbb{N}\\ s|g}}   \sum\limits_{\substack{v_1,v_2\in \mathbb{N}\\ (v_1,v_2)=1\\ v_1|g/s \\ v_2|g/s\\
v_1v_2|n/(es)}}
\sum\limits_{\substack{\beta_1,\beta_2\in \mathbb{N}\\ (\beta_2v_1,\beta_1v_2)=1\\ 
\beta_2|g/(sv_1) \\ \beta_1|g/(sv_2)}}
 \sum\limits_{\substack{m_1,m_2\in \mathbb{N}\\ m_1m_2|n/(esv_1v_2)\\ (m_1,m_2)=1\\ (m_1,g/(\beta_1v_2s))=1\\ (m_2,g/(\beta_2v_1s))=1\\ v_2m_1\sqrt{eg}\le 2\sqrt{X}}} \mu(m_1\beta_1v_2s)\mu(m_2\beta_2v_1s) \times\\ &  \Pi_{\beta_1,\beta_2}(1)\left(\Sigma_{\beta_1,\beta_2}(1)+\gamma+\log(2\sqrt{X})-\log(v_2m_1\sqrt{eg})\right)\cdot 
\frac{2\pi \hat{\Phi}(0)M}{\mu\nu m_1m_2v_1v_2\beta_1\beta_2se} + O\left(Mn^{\varepsilon-1/4}\right).
\end{split}
\end{equation}
Here we note the additional summation condition $v_2m_1\sqrt{eg}\le 2\sqrt{X}$. 
The contribution arising from the term $\Sigma_{\beta_1,\beta_2}(1)+\gamma$ is dominated by 
$$
\ll \left(1+\sum\limits_{p|n} \frac{\log p}{p}\right)\left(\sum\limits_{d|n} \frac{1}{d}\right)^9M\ll M(\log\log n)^{10}
$$
by a similar calculation as in \eqref{similar}.
For the remaining contribution, we note that the additional summation condition 
$v_2m_1\sqrt{eg}\le 2\sqrt{X}$ can be discarded at the cost of an error of size
$O\left(Mn^{\varepsilon-1/4}\right)$. (This is possible due to the denominator $\mu\nu m_1m_2v_1v_2\beta_1\beta_2 se$ on the right-hand side of \eqref{Min} and on writing $e=fg$ again.) Hence, we get
$$
\mathcal{M}=\mathcal{M}_1-\mathcal{M}_2+O\left(M(\log\log n)^{10}\right),
$$
where 
$$
\mathcal{M}_1:=\frac{\pi}{2} \hat{\Phi}(0)M\log(2\sqrt{X}) \sum\limits_{\substack{e\in \mathbb{N}\\ e|n}}  \sum\limits_{\substack{g\in \mathbb{N}\\ g|e}} \sum\limits_{\substack{s\in \mathbb{N}\\ s|g}}   \sum\limits_{\substack{v_1,v_2\in \mathbb{N}\\ (v_1,v_2)=1\\ v_1|g/s \\ v_2|g/s\\
v_1v_2|n/(es)}}
\sum\limits_{\substack{\beta_1,\beta_2\in \mathbb{N}\\ (\beta_2v_1,\beta_1v_2)=1\\ 
\beta_2|g/(sv_1) \\ \beta_1|g/(sv_2)}}  \Pi_{\beta_1,\beta_2}(1)
 \sum\limits_{\substack{m_1,m_2\in \mathbb{N}\\ m_1m_2|n/(esv_1v_2)\\ (m_1,m_2)=1\\ (m_1,g/(\beta_1v_2s))=1\\ (m_2,g/(\beta_2v_1s))=1}} \frac{\mu(m_1\beta_1v_2s)\mu(m_2\beta_2v_1s)}{m_1m_2v_1v_2\beta_1\beta_2se}
$$
and
\begin{equation*}
\begin{split}
\mathcal{M}_2 := & \frac{\pi}{2}\hat{\Phi}(0)M\sum\limits_{\substack{e\in \mathbb{N}\\ e|n}}  \sum\limits_{\substack{g\in \mathbb{N}\\ g|e}} \sum\limits_{\substack{s\in \mathbb{N}\\ s|g}}   \sum\limits_{\substack{v_1,v_2\in \mathbb{N}\\ (v_1,v_2)=1\\ v_1|g/s \\ v_2|g/s\\
v_1v_2|n/(es)}}
\sum\limits_{\substack{\beta_1,\beta_2\in \mathbb{N}\\ (\beta_2v_1,\beta_1v_2)=1\\ 
\beta_2|g/(sv_1) \\ \beta_1|g/(sv_2)}} \Pi_{\beta_1,\beta_2}(1)
 \sum\limits_{\substack{m_1,m_2\in \mathbb{N}\\ m_1m_2|n/(esv_1v_2)\\ (m_1,m_2)=1\\ (m_1,g/(\beta_1v_2s))=1\\ (m_2,g/(\beta_2v_1s))=1}} \mu(m_1\beta_1v_2s)\mu(m_2\beta_2v_1s) \times\\ & \frac{\log(v_2m_1\sqrt{eg})}{ m_1m_2v_1v_2\beta_1\beta_2se}.
\end{split}
\end{equation*}

In the following, we evaluate $\mathcal{M}_1$. 
We observe that the appearance of the $\mu$-function creates the extra coprimality conditions $(m_1,\beta_1v_2s)=1=(m_2,\beta_2v_1s)$. Hence, the last two coprimality conditions in the sum over $m_1$ and $m_2$ simplify into $(m_1m_2,g)=1$. Hence, we get
\begin{equation} \label{m1m2trans}
\begin{split}
 \sum\limits_{\substack{m_1,m_2\in \mathbb{N}\\ m_1m_2|n/(esv_1v_2)\\ (m_1,m_2)=1\\ (m_1,g/(\beta_1v_2s))=1\\ (m_2,g/(\beta_2v_1s))=1}} \frac{\mu(m_1\beta_1v_2s)\mu(m_2\beta_2v_1s)}{m_1m_2v_1v_2\beta_1\beta_2se}= &
\frac{\mu(\beta_1v_2s)\mu(\beta_2v_1s)}{v_1v_2\beta_1\beta_2se}\sum\limits_{\substack{m_1,m_2\in \mathbb{N}\\ m_1m_2|n/(esv_1v_2)\\ (m_1,m_2)=1\\ (m_1m_2,g)=1}} \frac{\mu(m_1)\mu(m_2)}{m_1m_2}\\
= & \frac{\mu(\beta_1v_2s)\mu(\beta_2v_1s)}{v_1v_2\beta_1\beta_2se}
\prod\limits_{\substack{p|n/e\\ p\nmid g}} \left(1-\frac{2}{p}\right). 
\end{split}
\end{equation} 
where we use that $s,v_1,v_2|g$. Thus,
$$
\mathcal{M}_1=\frac{\pi}{2} \hat{\Phi}(0)M\log(2\sqrt{X}) \sum\limits_{\substack{e\in \mathbb{N}\\ e|n}}  \sum\limits_{\substack{g\in \mathbb{N}\\ g|e}} \prod\limits_{\substack{p|n/e\\ p\nmid g}} \left(1-\frac{2}{p}\right) \sum\limits_{\substack{s\in \mathbb{N}\\ s|g}}   \sum\limits_{\substack{v_1,v_2\in \mathbb{N}\\ (v_1,v_2)=1\\ v_1|g/s \\ v_2|g/s\\
v_1v_2|n/(es)}}
\sum\limits_{\substack{\beta_1,\beta_2\in \mathbb{N}\\ (\beta_2v_1,\beta_1v_2)=1\\ 
\beta_2|g/(sv_1) \\ \beta_1|g/(sv_2)}}  \Pi_{\beta_1,\beta_2}(1)
\cdot \frac{\mu(\beta_1v_2s)\mu(\beta_2v_1s)}{v_1v_2\beta_1\beta_2se}.
$$
Again, the $\mu$-factors create the extra coprimality conditions $(\beta_1,v_2s)=1=(\beta_2,v_1s)$. Hence, we get
\begin{equation*}
\begin{split}
& \sum\limits_{\substack{\beta_1,\beta_2\in \mathbb{N}\\ (\beta_2v_1,\beta_1v_2)=1\\ 
\beta_2|g/(sv_1) \\ \beta_1|g/(sv_2)}}  \Pi_{\beta_1,\beta_2}(1)
\cdot \frac{\mu(\beta_1v_2s)\mu(\beta_2v_1s)}{v_1v_2\beta_1\beta_2se}\\ 
= &
\frac{\mu(v_1s)\mu(v_2s)}{v_1v_2se} \sum\limits_{\substack{\beta_1,\beta_2\in \mathbb{N}\\ (\beta_1\beta_2,v_1v_2s)=1\\ (\beta_1,\beta_2)=1\\ 
\beta_1,\beta_2|g}}  \Pi_{\beta_1,\beta_2}(1)
\cdot \frac{\mu(\beta_1)\mu(\beta_2)}{\beta_1\beta_2}\\
= & \frac{1}{\zeta(2)} \cdot \frac{\mu(v_1s)\mu(v_2s)}{v_1v_2se} \sum\limits_{\substack{\beta_1,\beta_2\in \mathbb{N}\\ (\beta_1\beta_2,v_1v_2s)=1\\ (\beta_1,\beta_2)=1\\ 
\beta_1,\beta_2|g}}  \prod\limits_{p|2\beta_1\beta_2} \left(1+\frac{1}{p}\right)^{-1}
\cdot \frac{\mu(\beta_1)\mu(\beta_2)}{\beta_1\beta_2}\\
= & \frac{4}{\pi^2}  \cdot \frac{\mu(v_1s)\mu(v_2s)}{v_1v_2se} \sum\limits_{\substack{\beta_1,\beta_2\in \mathbb{N}\\ (\beta_1\beta_2,v_1v_2s)=1\\ (\beta_1,\beta_2)=1\\ \beta_1,\beta_2|g}}  
\frac{\mu(\beta_1)\mu(\beta_2)}{\prod\limits_{p|\beta_1\beta_2}(p+1)}=
\frac{4}{\pi^2} \cdot \frac{\mu(v_1s)\mu(v_2s)}{v_1v_2se} \cdot \prod\limits_{\substack{p|g\\ p\nmid v_1v_2s}} \left(1-\frac{2}{p+1}\right) 
\end{split}
\end{equation*}
and thus
$$
\mathcal{M}_1=\frac{2}{\pi} \hat{\Phi}(0)M\log(2\sqrt{X}) \sum\limits_{\substack{e\in \mathbb{N}\\ e|n}}  \sum\limits_{\substack{g\in \mathbb{N}\\ g|e}} \prod\limits_{\substack{p|n/e\\ p\nmid g}} \left(1-\frac{2}{p}\right) \sum\limits_{\substack{s\in \mathbb{N}\\ s|g}}   \sum\limits_{\substack{v_1,v_2\in \mathbb{N}\\ (v_1,v_2)=1\\ v_1|g/s \\ v_2|g/s\\
v_1v_2|n/(es)}}  \frac{\mu(v_1s)\mu(v_2s)}{v_1v_2se} \cdot \prod\limits_{\substack{p|g\\ p\nmid v_1v_2s}} \left(1-\frac{2}{p+1}\right).
$$
Again, the $\mu$-factors create the extra coprimality condition $(v_1v_2,s)=1$. Hence, we get
\begin{equation*}
\begin{split}
& \sum\limits_{\substack{v_1,v_2\in \mathbb{N}\\ (v_1,v_2)=1\\ v_1|g/s \\ v_2|g/s\\
v_1v_2|n/(es)}}  \frac{\mu(v_1s)\mu(v_2s)}{v_1v_2se} \cdot \prod\limits_{\substack{p|g\\ p\nmid v_1v_2s}} \left(1-\frac{2}{p+1}\right)\\ = & \frac{\mu^2(s)}{se}\prod\limits_{\substack{p|g\\ p\nmid s}} \left(1-\frac{2}{p+1}\right) \sum\limits_{\substack{v_1,v_2\in \mathbb{N}\\ (v_1v_2,s)=1\\ (v_1,v_2)=1\\ v_1,v_2|g\\
v_1v_2|n/e}}  \frac{\mu(v_1)\mu(v_2)}{v_1v_2} \cdot \prod\limits_{p|v_1v_2} \left(1-\frac{2}{p+1}\right)^{-1}\\ = &
\frac{\mu^2(s)}{se}\prod\limits_{\substack{p|g\\ p\nmid s}} \left(1-\frac{2}{p+1}\right) \prod\limits_{\substack{p|g\\ p|n/e\\ p\nmid s}}  \left(1-\frac{2(p+1)}{p(p-1)}\right)
\end{split}
\end{equation*}
and thus
\begin{equation*}
\begin{split}
\mathcal{M}_1= & \frac{2}{\pi} \hat{\Phi}(0)M\log(2\sqrt{X}) \sum\limits_{\substack{e\in \mathbb{N}\\ e|n}}  \sum\limits_{\substack{g\in \mathbb{N}\\ g|e}} \prod\limits_{\substack{p|n/e\\ p\nmid g}} \left(1-\frac{2}{p}\right) \sum\limits_{\substack{s\in \mathbb{N}\\ s|g}}   
\frac{\mu^2(s)}{se}\prod\limits_{\substack{p|g\\ p\nmid s}} \left(1-\frac{2}{p+1}\right) \prod\limits_{\substack{p|g\\ p|n/e\\ p\nmid s}}  \left(1-\frac{2(p+1)}{p(p-1)}\right)\\
= & \frac{2}{\pi} \hat{\Phi}(0)M\log(2\sqrt{X}) \sum\limits_{\substack{e\in \mathbb{N}\\ e|n}} \frac{1}{e} \sum\limits_{\substack{g\in \mathbb{N}\\ g|e}} \prod\limits_{\substack{p|n/e\\ p\nmid g}} \left(1-\frac{2}{p}\right)  \prod\limits_{\substack{p|g}} \left(1-\frac{2}{p+1}\right) \prod\limits_{\substack{p|g\\ p|n/e}} \left(1-\frac{2(p+1)}{p(p-1)}\right)\times \\ & \sum\limits_{\substack{s\in \mathbb{N}\\ s|g}}   
\frac{\mu^2(s)}{s} \prod\limits_{\substack{p|s}} \left(1-\frac{2}{p+1}\right)^{-1} \prod\limits_{\substack{p|s\\ p|n/e}} \left(1-\frac{2(p+1)}{p(p-1)}\right)^{-1}\\
= & \frac{2}{\pi} \hat{\Phi}(0)M\log(2\sqrt{X}) \sum\limits_{\substack{e\in \mathbb{N}\\ e|n}}  \frac{1}{e}\sum\limits_{\substack{g\in \mathbb{N}\\ g|e}} \prod\limits_{\substack{p|n/e\\ p\nmid g}} \left(1-\frac{2}{p}\right)  \prod\limits_{\substack{p|g}} \left(1-\frac{2}{p+1}\right) \prod\limits_{\substack{p|g\\ p|n/e}} \left(1-\frac{2(p+1)}{p(p-1)}\right)\times\\ &
\prod\limits_{\substack{p|g\\ p\nmid n/e}} \left(1+\frac{1}{p}\left(1-\frac{2}{p+1}\right)^{-1}\right) \prod\limits_{\substack{p|g\\ p|n/e}} \left(1+ \frac{1}{p}\left(1-\frac{2}{p+1}\right)^{-1}\left(1-\frac{2(p+1)}{p(p-1)}\right)^{-1}\right),
\end{split}
\end{equation*}
which simplifies into 
\begin{equation*}
\begin{split}
\mathcal{M}_1= & \frac{2}{\pi} \hat{\Phi}(0)M\log(2\sqrt{X}) \sum\limits_{\substack{e\in \mathbb{N}\\ e|n}} \frac{1}{e} \sum\limits_{\substack{g\in \mathbb{N}\\ g|e}} \prod\limits_{\substack{p|n/e\\ p\nmid g}} \left(1-\frac{2}{p}\right)  \prod\limits_{\substack{p|g\\ p\nmid n/e}} \left(1-\frac{p-1}{p(p+1)}\right) \prod\limits_{\substack{p|g\\ p|n/e}} \left(1-\frac{2}{p}-\frac{p-1}{p(p+1)}\right)\\
= & \frac{2}{\pi} \mathcal{P}(n)\hat{\Phi}(0)M\log(2\sqrt{X})
\end{split}
\end{equation*}
with 
\begin{equation} \label{Pndef}
\mathcal{P}(n):= 
\sum\limits_{\substack{f\in \mathbb{N}\\ f|n}} \frac{1}{f}\sum\limits_{\substack{g\in \mathbb{N}\\ g|n/f}} \frac{1}{g} \prod\limits_{\substack{p|n/(fg)\\ p\nmid g}} \left(1-\frac{2}{p}\right) \prod\limits_{\substack{p|g\\ p\nmid n/(fg)}} \left(1-\frac{p-1}{p(p+1)}\right) \prod\limits_{\substack{p|g\\ p|n/(fg)}} \left(1-\frac{2}{p}-\frac{p-1}{p(p+1)}\right),
\end{equation}
where we have written $e=fg$. We note that the above implies the estimate
\begin{equation} \label{M1esti}
\frac{M\log n}{(\log\log n)^3}\ll \mathcal{M}_1\ll M(\log n)(\log\log n)^2.
\end{equation}
It seems to be difficult to simplify $\mathcal{P}(n)$ for general $n$. However, if $n$ is square-free, then $p|g$ implies $p\nmid n/(fg)$ and $p|n/(fg)$ implies $p\nmid g$, and $\mathcal{P}(n)$ therefore simplifies into
\begin{equation} \label{Pnsimplified}
\begin{split}
\mathcal{P}(n)= \sum\limits_{\substack{f\in \mathbb{N}\\ f|n}} \frac{1}{f}\sum\limits_{\substack{g\in \mathbb{N}\\ g|n/f}} \frac{1}{g} \prod\limits_{p|n/(fg)} \left(1-\frac{2}{p}\right) \prod\limits_{p|g} \left(1-\frac{p-1}{p(p+1)}\right) 
=  \prod\limits_{p|n} \left(1-\frac{p-1}{p^2(p+1)}\right),
\end{split}
\end{equation}
where we use the multiplicativity of $\mathcal{P}(n)$.

Next, we analysise $\mathcal{M}_2$. Writing $e=fg$, we obtain the rough bound
$$
\mathcal{M}_2\ll M\sum\limits_{f,g,s,v_1,v_2,\beta_1,\beta_2,m_1,m_2|n} \frac{\log(v_2m_1eg)}{m_1m_2v_1v_2\beta_1\beta_2sfg} \ll M\left(\sum\limits_{d|n} \frac{1}{d}\right)^8\sum\limits_{d|n} \frac{\log d}{d} \ll M(\log\log n)^8\sum\limits_{d|n} \frac{\log d}{d}.
$$
To estimate the last sum on the right-hand side, we note that 
$$
\sum\limits_{d|n} \frac{\log d}{d} = -F'(1),
$$
where 
$$
F(s):=\sum\limits_{d|n} d^{-s}= \prod\limits_{p|n}\left(1+p^{-s}+p^{-2s}+\cdots +p^{-v_p(n)s}\right),
$$
where $p^{v_p(n)}$ is the largest power of $p$ dividing $n$. Taking logarithmic derivative, we get
$$
\frac{F'(s)}{F(s)}=-\sum\limits_{p|n} \frac{p^{-s}+2p^{-2s}+\cdots +v_p(n)p^{-v_p(n)s}}{1+p^{-s}+p^{-2s}+\cdots+p^{-v_p(n)s}} \cdot \log p.
$$
Hence,
$$
-F'(1)=F(1)\cdot \sum\limits_{p|n} \frac{p^{-1}+2p^{-2}+\cdots +v_p(n)p^{-v_p(n)}}{1+p^{-1}+p^{-2}+\cdots+p^{-v_p}}\cdot \log p \ll (\log \log n)\cdot \left(1+\sum\limits_{p|n} \frac{\log p}{p}\right) \ll (\log \log n)^2. 
$$
It follows that 
$$
\mathcal{M}_2\ll M(\log \log n)^{10}.
$$

Combining everything in this section and noting that 
$$
\log(2\sqrt{X})=\frac{\log n}{2}+O(1),
$$
we arrive at the asymptotic estimate
\begin{equation} \label{asymptoticestimate}
S_{1,0,0}^{\sharp,-}=\frac{2}{\pi} \mathcal{P}(n)\hat{\Phi}(0)M\log n +O\left(M(\log\log n)^{10}\right).
\end{equation}
In view of \eqref{M1esti}, the $O$-term above is a true error term. 

\section{Estimations on $I_{\mu,\nu}(w,l;\alpha_2\beta_2)$} \label{intestis}
Now we analyse the Fourier integral $I_{\mu,\nu}(w,l;\alpha_2\beta_2)$ defined in \eqref{Fourierintegraldef}. Integration by parts $I$ times in $x$ gives
\begin{equation} \label{partials1}
\begin{split}
I_{\mu,\nu}(w,l;\alpha_2\beta_2)
= & \iint\limits_{\mathcal{V}} \Phi_{\mu,\nu}\left(x,y\right)
 e\left(-\frac{wx+ly}{\alpha_2\beta_2}\right)dxdy\\
= & \left(-\frac{\alpha_2\beta_2}{2\pi iw}\right)^{I}\cdot \iint\limits_{\mathcal{V}} 
\left(\frac{\partial^{I}}{\partial x^I} \Phi_{\mu,\nu}\left(x,y\right)\right)
 e\left(-\frac{wx+ly}{\alpha_2\beta_2}\right)dxdy,
\end{split}
\end{equation} 
and integration by parts $J$ times in $y$ gives
\begin{equation} \label{partials2}
\begin{split}
I_{\mu,\nu}(w,l;\alpha_2\beta_2)
= & \iint\limits_{\mathcal{V}} \Phi_{\mu,\nu}\left(x,y\right)
 e\left(-\frac{wx+ly}{\alpha_2\beta_2}\right)dxdy\\
= & \left(-\frac{\alpha_2\beta_2}{2\pi il}\right)^{J}\cdot \iint\limits_{\mathcal{V}} 
\left(\frac{\partial^{J}}{\partial y^J} \Phi_{\mu,\nu}\left(x,y\right)\right)
 e\left(-\frac{wx+ly}{\alpha_2\beta_2}\right)dxdy.
\end{split}
\end{equation} 
We want to find out under which conditions on $w,l,\alpha_2$ the right-hand sides of the equations above become negligible if $I$ or $J$ are large enough, respectively. By the generalized product rule, the partial derivatives of the weight function turn out to be
\begin{equation*}
\begin{split}
\frac{\partial^{I}}{\partial x^I}  \Phi_{\mu,\nu}\left(x,y\right)= &
\sum\limits_{C_1+C_2+C_3+C_4=I}  
\frac{I!}{C_1!C_2!C_3! C_4!} \cdot \frac{\partial^{C_1}}{\partial x^{C_1}} \Phi_1\left(F_1(x,y)\right) \frac{\partial^{C_2}}{\partial x^{C_2}}\Phi_2\left(F_2(x,y)\right)\frac{\partial^{C_3}}{\partial x^{C_3}}\Phi_3\left(F_3(x,y)\right)\frac{\partial^{C_4}}{\partial x^{C_4}}\Phi^{-}\left(F_4(x)\right)
\end{split}
\end{equation*}
and 
\begin{equation*}
\begin{split}
\frac{\partial^{J}}{\partial y^J}  \Phi_{\mu,\nu}\left(x,y\right)= &
\Phi^{-}\left(F_4(x)\right)\cdot\sum\limits_{D_1+D_2+D_3=J} 
\frac{J!}{D_1!D_2! D_3!} \cdot \frac{\partial^{D_1}}{\partial y^{D_1}}  \Phi_1\left(F_1(x,y)\right) \frac{\partial^{D_2}}{\partial y^{D_2}}\Phi_2\left(F_2(x,y)\right)\frac{\partial^{D_3}}{\partial y^{D_3}}\Phi_3\left(F_3(x,y)\right).
\end{split}
\end{equation*}
Moreover, by Fa\'a di Bruno's formula, 
\begin{equation*}
\frac{\partial^{C_i}}{\partial x^{C_i}} \Phi_i\left(F_i(x,y)\right)=
\sum\limits_{E_1+2E_2+\cdots + C_i E_{C_i}=C_i} \frac{C_i!}{E_1!1^{E_1}E_2!2!^{E_2}\cdots E_{C_i}!C_i!^{E_{C_i}}}\cdot \Phi_i^{(E_1+\cdots+E_{C_i})}\left(F_i(x,y)\right)\cdot \prod\limits_{k=1}^{C_i} \left(\frac{\partial^k}{\partial x^k} F_i(x,y)\right)^{E_k},
\end{equation*}
\begin{equation*}
\frac{\partial^{C_4}}{\partial x^{C_4}} \Phi^{-}\left(F_4(x)\right)=
\sum\limits_{E_1+2E_2+\cdots + C_4E_{C_4}=C_4} \frac{C_4!}{E_1!1^{E_1}E_2!2!^{E_2}\cdots E_{C_4}!C_4!^{E_{C_4}}}\cdot \left(\Phi^{-}\right)^{(E_1+\cdots+E_{C_4})}\left(F_4(x)\right)\cdot \prod\limits_{k=1}^{C_4} \left(F_4^{(k)}(x)\right)^{E_k},
\end{equation*}
\begin{equation*}
\frac{\partial^{D_i}}{\partial y^{D_i}} \Phi_i\left(F_i(x,y)\right)=
\sum\limits_{E_1+2E_2+\cdots + D_iE_{D_i}=D_i} \frac{D_i!}{E_1!1^{E_1}E_2!2!^{E_2}\cdots E_{D_i}!D_i!^{E_{D_i}}}\cdot \Phi_i^{(E_1+\cdots+E_{D_i})}\left(F_i(x,y)\right)\cdot \prod\limits_{k=1}^{D_i} \left(\frac{\partial^k}{\partial y^k}F_i(x,y)\right)^{E_k},
\end{equation*}
where $i=1,2,3$. This simplifies into
\begin{equation*}
\begin{split}
\frac{\partial^{C_1}}{\partial x^{C_1}} \Phi_1\left(F_1(x,y)\right)=&
\Phi_1^{(C_1)}\left(F_1(x,y)\right)\cdot \left(\mu\nu Ay\right)^{C_1},\\
\frac{\partial^{C_2}}{\partial x^{C_2}} \Phi_2\left(F_2(x,y)\right)=&
\Phi_2^{(C_2)}\left(F_2(x,y)\right)\cdot \left(\frac{\alpha_2\mu B}{2}-\frac{\mu \nu^2  Dy^2}{2g\alpha_2}\right)^{C_2},\\
\frac{\partial^{C_3}}{\partial x^{C_3}} \Phi_3\left(F_3(x,y)\right)=&
\Phi_3^{(C_3)}\left(F_3(x,y)\right)\cdot \left(\frac{\alpha_2\mu B}{2}+\frac{\mu \nu^2  Dy^2}{2g\alpha_2}\right)^{C_3},\\
\frac{\partial^{C_4}}{\partial x^{C_4}} \Phi^{-}\left(F_4(x)\right)=&
\left(\Phi_4^{-}\right)^{(C_4)}\left(F_4(x)\right)\cdot \left(\frac{F_4(x)}{x}\right)^{C_4},\\
\frac{\partial^{D_1}}{\partial y^{D_1}} \Phi_1\left(F_1(x,y)\right)=&
\Phi_1^{(D_1)}\left(F_1(x,y)\right)\cdot \left(\mu\nu Ax\right)^{D_1},\\
\frac{\partial^{D_2}}{\partial y^{D_2}} \Phi_2\left(F_2(x,y)\right)=&
\sum\limits_{E_1+2E_2=D_2} \frac{D_2!}{E_1!E_2!2!^{E_2}}\cdot \Phi_2^{(E_1+E_2)}\left(F_2(x,y)\right)\cdot \left(-\frac{n\nu C+\mu \nu^2  Dxy}{g\alpha_2}\right)^{E_1}\left(-\frac{\mu \nu^2  Dx}{g\alpha_2}\right)^{E_2},\\
\frac{\partial^{D_3}}{\partial y^{D_3}} \Phi_3\left(F_3(x,y)\right)=&
\sum\limits_{E_1+2E_2=D_3} \frac{D_3!}{E_1!E_2!2!^{E_2}}\cdot \Phi_3^{(E_1+E_2)}\left(F_3(x,y)\right)\cdot \left(\frac{n\nu C+\mu \nu^2  Dxy}{g\alpha_2}\right)^{E_1}\left(\frac{\mu \nu^2  Dx}{g\alpha_2}\right)^{E_2}.
\end{split}
\end{equation*}
To turn the above equalities into upper bounds for the partial derivatives, we need to localize the pairs $(x,y)\in \mathbb{R}^2$ for which the weight functions do not vanish. We recall the formulas
\begin{equation} \label{xeq} 
x=\frac{\sigma+\tau}{\alpha_2\mu B}
\end{equation}
and 
\begin{equation} \label{yeq} 
y=\frac{g\alpha_2(-n+\sqrt{n^2-\sigma^2+\tau^2})}{\nu\beta_1s(\sigma+\tau)}
\end{equation}
from section \ref{intcomp} and the inequalities 
\begin{equation} \label{sigmatausizes}
0\le \sigma\le 2X \quad  \mbox{and} \quad  0\le \tau\le M.
\end{equation}
Using the binomial formula $x^2-y^2=(x-y)(x+y)$, \eqref{yeq} implies
\begin{equation*}
|y|\le \frac{g\alpha_2|\sigma-\tau|}{n\nu \beta_1s}\le \frac{g\alpha_2(\sigma+\tau)}{n\nu\beta_1s}.
\end{equation*}
From this, we also deduce that
\begin{equation*}
|xy|\le \frac{g(\sigma+\tau)^2}{n\beta_1s B}.
\end{equation*} 
Since $X\asymp n$ and $M\le n$, it follows that 
\begin{equation} \label{xysizes}
|x|\ll \frac{n}{\alpha_2B}, \quad |y|\ll \frac{g\alpha_2}{\beta_1s} \quad \mbox{and} \quad |xy|\ll \frac{gn}{\beta_1s B}.
\end{equation}
Now using our definitions of $A,B,C,D$ in \eqref{ABCDEdef}, \eqref{xysizes}, our conditions on the weight functions from section \ref{ini} and $Y\le M$, we obtain the bounds
\begin{equation} \label{partialsbounds}
\begin{split}
\frac{\partial^{C_i}}{\partial x^{C_i}} \Phi_i\left(F_i(x,y)\right)\ll_{C_i} & \left(\frac{ m_1m_2v_1v_2eg\alpha_2}{Y}\right)^{C_i} \quad \mbox{for } i=1,2,3,\\
\frac{\partial^{C_4}}{\partial x^{C_4}} \Phi^{-}\left(F_4(x)\right)\ll_{C_4} &
\left(\frac{m_1m_2v_1v_2eg\alpha_2}{n}\right)^{C_4},\\
\frac{\partial^{D_j}}{\partial y^{D_j}} \Phi_j\left(F_j(x,y)\right)\ll_{D_j} &
\left(\frac{\beta_1sn}{g\alpha_2Y}\right)^{D_j}\quad \mbox{for } j=1,2,3.
\end{split}
\end{equation}
Therefore, the integral $I_{\mu,\nu}(w,l;\alpha_2\beta_2)$ is negligble if 
\begin{equation} \label{wlneg}
|w|\ge n^{\varepsilon}\cdot \frac{m_1m_2v_1v_2eg\beta_2\alpha_2^2}{Y} \quad \mbox{or} \quad |l|\ge n^{\varepsilon}\cdot \frac{\beta_1\beta_2sn}{gY}.
\end{equation}
Otherwise, we may estimate the integral $I_{\mu,\nu}(w,l;\alpha_2\beta_2)$ trivially: Using \eqref{Gdef}, \eqref{mainintegral} and \eqref{Uintegral}, we have
\begin{equation} \label{trivial}
|I_{\mu,\nu}(w,l;\alpha_2\beta_2)|\le I_{\mu,\nu}(0,0;\alpha_2\beta_2) \le \iint\limits_{\mathcal{V}} \Phi_3(F_3(x,y))dxydy =\frac{2\pi \hat{\phi}(0)M}{\mu\nu m_1m_2v_1v_2\beta_1se}.
\end{equation}

To prepare calculations involving averages of Kloosterman sums, we estimate higher-order partial derivatives of $I_{\mu,\nu}(w,l;\alpha_2\beta_2)$ as well. These estimates will not be needed to establish our main result, however. Therefore, we keep the calculations in the rest of this section brief.  By similar considerations as above, it can be seen that 
$$
 \frac{\partial^{i+j+k}}{\partial w^i\partial l^j \partial \alpha_2^k} I_{\mu,\nu}(w,l;\alpha_2\beta_2)
$$
is negligible if $w,l$ satisfy \eqref{wlneg}. We now bound these partial derivatives in the complementary case when 
\begin{equation} \label{wlcon}
|w|\le n^{\varepsilon}\cdot \frac{m_1m_2v_1v_2eg\beta_2\alpha_2^2}{Y}
\quad \mbox{and} \quad |l|\le n^{\varepsilon}\cdot \frac{\beta_1\beta_2sn}{gY}.
\end{equation}
Using the Leibniz rule for integration under the integral sign, we have 
\begin{equation} \label{partialscalc}
\begin{split}
& \frac{\partial^{i+j+k}}{\partial w^i\partial l^j \partial \alpha_2^k} I_{\mu,\nu}(w,l;\alpha_2\beta_2)\\
= & \iint\limits_{\mathcal{V}} 
\frac{\partial^{i+j+k}}{\partial w^i\partial l^j \partial \alpha_2^k}\Bigg(
\Phi_1\left(n+\mu\nu Axy\right)\Phi_2\left(\frac{\alpha_2\mu Bx}{2}-\frac{2n\nu Cy+\mu \nu^2  Dxy^2}{2g\alpha_2}\right)\times\\
& \Phi_3\left(\frac{\alpha_2\mu Bx}{2}+\frac{2n\nu Cy+\mu\nu^2 Dxy^2}{2g\alpha_2}\right)\Phi^{-}\left(\frac{\mu v_1m_2x}{\alpha_2v_2m_1}\right)
 e\left(-\frac{wx+ly}{\alpha_2\beta_2}\right)\Bigg)dxdy\\
= & \iint\limits_{\mathcal{V}}\Phi_1\left(n+\mu\nu Axy\right) \frac{\partial^{k}}{\partial \alpha_2^k}\Bigg(\Phi_2\left(\frac{\alpha_2\mu Bx}{2}-\frac{2n\nu Cy+\mu \nu^2  Dxy^2}{2g\alpha_2}\right)\times\\
& \Phi_3\left(\frac{\alpha_2\mu Bx}{2}+\frac{2n\nu Cy+\mu\nu^2 Dxy^2}{2g\alpha_2}\right)\Phi^{-}\left(\frac{\mu v_1m_2x}{\alpha_2v_2m_1}\right)
\frac{\partial^{i+j}}{\partial w^i\partial l^j} e\left(-\frac{wx+ly}{\alpha_2\beta_2}\right)\Bigg)dxdy\\
\ll & \left(\frac{2\pi}{\beta_2}\right)^{i+j}\Bigg|\iint\limits_{\mathcal{V}} x^iy^j \Phi_1\left(n+\mu\nu Axy\right)\frac{\partial^{k}}{\partial \alpha_2^k}\Bigg(\frac{1}{\alpha_2^{i+j}}\cdot \Phi_2\left(\frac{\alpha_2\mu Bx}{2}-\frac{2n\nu Cy+\mu \nu^2  Dxy^2}{2g\alpha_2}\right)\times\\
& \Phi_3\left(\frac{\alpha_2\mu Bx}{2}+\frac{2n\nu Cy+\mu\nu^2 Dxy^2}{2g\alpha_2}\right)\Phi^{-}\left(\frac{\mu v_1m_2x}{\alpha_2v_2m_1}\right)
e\left(-\frac{wx+ly}{\alpha_2\beta_2}\right)\Bigg)dxdy\Bigg|\\
\ll & \left(\frac{2\pi}{\beta_2}\right)^{i+j}\Bigg|\iint\limits_{\mathcal{V}} x^iy^j\Phi_1\left(n+\mu\nu Axy\right)\sum\limits_{c_1+c_2+c_3+c_4+c_5=k} \frac{k!}{c_1!c_2!c_3!c_4!c_5!} \cdot \frac{\partial^{c_1}}{\partial\alpha_2^{c_1}} \frac{1}{\alpha_2^{i+j}}\times \\ & \frac{\partial^{c_2}}{\partial \alpha_2^{c_2}} \Phi_2\left(\frac{\alpha_2\mu Bx}{2}-\frac{2n\nu Cy+\mu \nu^2  Dxy^2}{2g\alpha_2}\right)\frac{\partial^{c_3}}{\partial \alpha_2^{c_3}}\Phi_3\left(\frac{\alpha_2\mu Bx}{2}+\frac{2n\nu Cy+\mu\nu^2 Dxy^2}{2g\alpha_2}\right)\times\\ & \frac{\partial^{c_4}}{\partial \alpha_2^{c_4}}  \Phi^{-}\left(\frac{\mu v_1m_2x}{\alpha_2v_2m_1}\right) \frac{\partial^{c_5}}{\partial \alpha_2^{c_5}}
e\left(-\frac{wx+ly}{\alpha_2\beta_2}\right)dxdy\Bigg|.
\end{split}
\end{equation} 
Again using Fa\'a di Bruno's formula, our definitions of $A,B,C,D$ in \eqref{ABCDEdef}, \eqref{xysizes}, \eqref{wlcon}, our conditions on the weight functions from section \ref{ini} and $Y\le M$, we obtain the bounds
\begin{equation*}
\begin{split}
\frac{\partial^{c_1}}{\partial\alpha_2^{c_1}} \frac{1}{\alpha_2^{i+j}} \ll_{c_1} & \frac{1}{\alpha^{i+j+c_1}},\\
\frac{\partial^{c_2}}{\partial \alpha_2^{c_2}}\Phi_2\left(\frac{\alpha_2\mu Bx}{2}-\frac{2n\nu Cy+\mu\nu^2  Dxy^2}{2g\alpha_2}\right)\ll_{c_2} & \left(\frac{n}{\alpha_2 Y}\right)^{c_2}, \\
\frac{\partial^{c_3}}{\partial \alpha_2^{c_3}}\Phi_3\left(\frac{\alpha_2\mu Bx}{2}+\frac{2n\nu Cy+\mu\nu^2 Dxy^2}{2g\alpha_2}\right)\ll_{c_3} & \left(\frac{n}{\alpha_2 Y}\right)^{c_3}, \\
\frac{\partial^{c_4}}{\partial \alpha_2^{c_4}}\Phi^{-}\left(\frac{\mu v_1m_2x}{\alpha_2v_2m_1}\right)\ll_{c_4} & \left(\frac{1}{\alpha_2}\right)^{c_4}, \\
\frac{\partial^{c_5}}{\partial \alpha_2^{c_5}}
e\left(-\frac{wx+ly}{\alpha_2\beta_2}\right)\ll_{c_5} & \left(\frac{n^{1+\varepsilon}}{\alpha_2 Y}\right)^{c_5}.
\end{split}
\end{equation*}
Altogether, we get
\begin{equation} \label{kderest}
\begin{split}
& \sum\limits_{c_1+c_2+c_3+c_4+c_5=k} \frac{k!}{c_1!c_2!c_3!c_4!c_5!} \cdot \frac{\partial^{c_1}}{\partial\alpha_2^{c_1}} \frac{1}{\alpha^{i+j}} \cdot\frac{\partial^{c_2}}{\partial \alpha_2^{c_2}}\Phi_2\left(\frac{\alpha_2\mu Bx}{2}-\frac{2n\nu Cy+\mu \nu^2  Dxy^2}{2g\alpha_2}\right)\times\\
& \frac{\partial^{c_3}}{\partial \alpha_2^{c_3}}\Phi_3\left(\frac{\alpha_2\mu Bx}{2}+\frac{2n\nu Cy+\mu\nu^2 Dxy^2}{2g\alpha_2}\right)\frac{\partial^{c_4}}{\partial \alpha_2^{c_4}}\Phi^{-}\left(\frac{\mu v_1m_2x}{\alpha_2v_2m_1}\right) \frac{\partial^{c_5}}{\partial \alpha_2^{c_5}}
e\left(-\frac{wx+ly}{\alpha_2\beta_2}\right)\\
\ll_k & \frac{1}{\alpha_2^{i+j}} \left(\frac{n^{1+\varepsilon}}{\alpha_2 Y}\right)^{k}. 
\end{split}
\end{equation}
Plugging \eqref{kderest} into the last line of \eqref{partialscalc}, using the bound 
$$
x^iy^j\ll_{i,j} \left(\frac{n}{\alpha_2m_1m_2v_1v_2eg}\right)^i\left( \frac{\alpha_2g}{\beta_1s}\right)^j
$$
following from the definition of $B$ in \eqref{ABCDEdef} and the bounds for $x$ and $y$ in \eqref{xysizes}, and 
noting the formula (cf. \eqref{mainintegral} and \eqref{Uintegral} in section \ref{intcomp})
\begin{equation*}
\mbox{Vol}(\mathcal{V}) = \frac{1}{G} \iint\limits_{\mathcal{U}} \frac{1}{\sqrt{n^2-\sigma^2+\tau^2}}
d\sigma d\tau=\frac{2\pi M}{\mu\nu m_1m_2v_1v_2\beta_1se},
\end{equation*}
we obtain
\begin{equation} \label{partialderivativesbound}
\begin{split}
\frac{\partial^{i+j+k}}{\partial w^i\partial l^j \partial \alpha_2^k} I_{\mu,\nu}(w,l;\alpha_2\beta_2)\ll_{i,j,k} & \frac{M}{m_1m_2v_1v_2\beta_1se}\cdot \left(\frac{n}{\alpha_2^2\beta_2m_1m_2v_1v_2eg}\right)^i\left( \frac{g}{\beta_1\beta_2s}\right)^j\left(\frac{n^{1+\varepsilon}}{\alpha_2 Y}\right)^{k}\\
\ll & \frac{M}{m_1m_2v_1v_2\beta_1se}\cdot \left(\frac{n^{1+\varepsilon}}{Y}\right)^{i+j+k} \cdot \frac{1} {|w|^i|l|^j|\alpha_2|^k},
\end{split}
\end{equation}
where we use \eqref{wlcon} for the last line.
This indicates that $I_{\mu,\nu}(w,l;\alpha_2\beta_2)$ is (weakly) oscillating.

\section{An improved bound for $I_{\mu,\nu}(0,l;\alpha_2\beta_2)$}
In this section, we supply a particular bound for $I_{\mu,\nu}(0,l;\alpha_2\beta_2)$
which will be used in the next section where we handle the contribution of $T_{\mu,\nu}^{0,1}$ to $S_1^{\sharp,-}$. 

Using \eqref{partials2} with $J=1$ and the last bound in 
\eqref{partialsbounds} with $D_j=1$, we obtain
\begin{equation*}
I_{\mu,\nu}(0,l;\alpha_2\beta_2)\ll \frac{n}{Y}\cdot
\frac{\beta_1\beta_2s}{|l|g}\cdot \frac{M}{m_1m_2v_1v_2\beta_1es}.
\end{equation*}
Our goal is to replace the factor $n/Y$ on the right-hand side by $\log n$, i.e., to establish the sharper bound
\begin{equation} \label{non-trivial}
I_{\mu,\nu}(0,l;\alpha_2\beta_2)\ll 
(\log n)\cdot \frac{\beta_1\beta_2s}{|l|g}\cdot \frac{M}{m_1m_2v_1v_2\beta_1es}.
\end{equation}

Changing variables as in section \ref{intcomp}, we obtain
$$
I_{\mu,\nu}(0,l;\alpha_2\beta_2)=\frac{1}{G}\int\limits_{M}^{2M}\int\limits_{Y/2}^{2X} \frac{\Phi_1\left(\sqrt{n^2-\sigma^2+\tau^2}\right)\Phi_2(\sigma)
\Phi_3(\tau)\Phi^{-}\left(\frac{\sigma+\tau}{\alpha_2^2 m_1^2v_2^2eg}\right)}{\sqrt{n^2-\sigma^2+\tau^2}} \cdot e\left(f(\sigma,\tau)\right) d\sigma d\tau,
$$
where
$$
f(\sigma,\tau):=-\frac{g\left(-n+\sqrt{n^2-\sigma^2+\tau^2}\right)l}{\nu \beta_1\beta_2s(\sigma+\tau)}.
$$
Using integration by parts in $\sigma$, it follows that
$$
I_{\mu,\nu}(0,l;\alpha_2\beta_2)=\frac{1}{G}\int\limits_{M}^{2M}\int\limits_{Y/2}^{2X} \frac{\partial}{\partial \sigma} \left(\frac{\Phi_1\left(\sqrt{n^2-\sigma^2+\tau^2}\right)\Phi_2(\sigma)
\Phi_3(\tau)\Phi^{-}\left(\frac{\sigma+\tau}{\alpha_2^2 m_1^2v_2^2eg}\right)}{\sqrt{n^2-\sigma^2+\tau^2}}\right) \cdot \frac{e\left(f(\sigma,\tau)\right)}{2\pi i \frac{\partial}{\partial \sigma} f(\sigma,\tau)}  d\sigma d\tau.
$$
A short calculation reveals that 
$$
\frac{1}{\left|\frac{\partial}{\partial \sigma} f(\sigma,\tau)\right|}=\frac{\beta_1\beta_2s}{|l|g} \cdot \sqrt{n^2-\sigma^2+\tau^2}\cdot \left|1+\frac{\sigma\sqrt{n^2-\sigma^2+\tau^2}+\tau n}{\sigma n+\tau \sqrt{n^2-\sigma^2+\tau^2}} \right|\ll \frac{\beta_1\beta_2s}{|l|g} \cdot \sqrt{n^2-\sigma^2+\tau^2}. 
$$
Further,
\begin{equation*}
\begin{split}
& \frac{\partial}{\partial \sigma} \left(\frac{\Phi_1\left(\sqrt{n^2-\sigma^2+\tau^2}\right)\Phi_2(\sigma)
\Phi_3(\tau)\Phi^{-}\left(\frac{\sigma+\tau}{\alpha_2^2 m_1^2v_2^2eg}\right)}{\sqrt{n^2-\sigma^2+\tau^2}}\right)\\
= & \frac{\frac{\partial}{\partial \sigma}\left(\Phi_1\left(\sqrt{n^2-\sigma^2+\tau^2}\right)\Phi_2(\sigma)
\Phi_3(\tau)\Phi^{-}\left(\frac{\sigma+\tau}{\alpha_2^2 m_1^2v_2^2eg}\right)\right)}{\sqrt{n^2-\sigma^2+\tau^2}}+\sigma\cdot \frac{\Phi_1\left(\sqrt{n^2-\sigma^2+\tau^2}\right)\Phi_2(\sigma)
\Phi_3(\tau)\Phi^{-}\left(\frac{\sigma+\tau}{\alpha_2^2 m_1^2v_2^2eg}\right)}{\left(n^2-\sigma^2+\tau^2\right)^{3/2}}. 
\end{split}
\end{equation*}
Combining the above relations yields the bound 
\begin{equation*}
\begin{split}
I_{\mu,\nu}(0,l;\alpha_2\beta_2)\ll &\frac{\beta_1\beta_2s}{|l|g} \cdot \frac{1}{G}\cdot\Bigg( \int\limits_{M}^{2M}\int\limits_{Y/2}^{2X} \left|\frac{\partial}{\partial \sigma} \left(\Phi_1\left(\sqrt{n^2-\sigma^2+\tau^2}\right)\Phi_2(\sigma)
\Phi_3(\tau)\Phi^{-}\left(\frac{\sigma+\tau}{\alpha_2^2 m_1^2v_2^2eg}\right)\right)\right|d\sigma d\tau+\\ 
& \iint\limits_{\mathcal{U}} \frac{\sigma}{n^2-\sigma^2+\tau^2} d\sigma d\tau\Bigg).
\end{split}
\end{equation*}
The monotonicity properties of $\Phi_{1}$, $\Phi_2$ and $\Phi^{-}$ assumed in section \ref{ini} together with the fundamental theorem of calculus imply
$$
\int\limits_{M}^{2M}\int\limits_{Y/2}^{2X} \left|\frac{\partial}{\partial \sigma} \left(\Phi_1\left(\sqrt{n^2-\sigma^2+\tau^2}\right)\Phi_2(\sigma)
\Phi_3(\tau)\Phi^{-}\left(\frac{\sigma+\tau}{\alpha_2^2 m_1^2v_2^2eg}\right)\right)\right|d\sigma d\tau\ll M.
$$
A partial fraction decomposition shows that 
$$
\iint\limits_{\mathcal{U}} \frac{\sigma}{n^2-\sigma^2+\tau^2} d\sigma d\tau\ll M\log n. 
$$
The claimed bound \eqref{non-trivial} follows. 

\section{Error contribution of $T_{\mu,\nu}^{0,1}$} \label{crucial}
Now we deal with the contribution of the term $T_{\mu,\nu}^{0,1}$ in \eqref{T01} to $S^{\sharp,-}_1$. Taking our restriction \eqref{alpha2restrict} into account, this contribution equals
\begin{equation} \label{S101start}
\begin{split}
S_{1,0,1}^{\sharp,-}= & \sum\limits_{\mu=1}^2\sum\limits_{\nu=1}^2 (-1)^{\mu+\nu}\sum\limits_{\substack{e\in \mathbb{N}\\ e|n}}  \sum\limits_{\substack{g\in \mathbb{N}\\ g|e}} \sum\limits_{\substack{s\in \mathbb{N}\\ s|g}}   \sum\limits_{\substack{v_1,v_2\in \mathbb{N}\\ (v_1,v_2)=1\\ v_1|g/s \\ v_2|g/s\\
v_1v_2|n/(es)}}
\sum\limits_{\substack{\beta_1,\beta_2\in \mathbb{N}\\ (\beta_2v_1,\beta_1v_2)=1\\ 
\beta_2|g/(sv_1) \\ \beta_1|g/(sv_2)}}
 \sum\limits_{\substack{m_1,m_2\in \mathbb{N}\\ m_1m_2|n/(esv_1v_2)\\ (m_1,m_2)=1\\ (m_1,g/(\beta_1v_2s))=1\\ (m_2,g/(\beta_2v_1s))=1}} \mu(m_1\beta_1v_2s)\mu(m_2\beta_2v_1s) \times\\ & \sum\limits_{\substack{\alpha_2\in \mathbb{N}\\ (\alpha_2,2\beta_1)=1\\ \alpha_2\le 2\sqrt{X}/(v_2m_1\sqrt{eg})}}\frac{\varphi(\alpha_2\beta_2)}{(\alpha_2\beta_2)^2}\sum\limits_{l\in \mathbb{Z}\setminus\{0\}}  
\frac{\mu(\alpha_2\beta_2/(lE,\alpha_2\beta_2))}{\varphi(\alpha_2\beta_2/(lE,\alpha_2\beta_2))}\cdot \ I_{\mu,\nu}(0,l;\alpha_2\beta_2),
\end{split}
\end{equation}  
where we recall that
\begin{equation} \label{Edef1}
E=\frac{2n}{m_1m_2v_1v_2se}.
\end{equation}

We will obtain cancellation in the $l$-summation by applying Poisson summation backward. To this end, we write
\begin{equation} \label{tildeE}
\tilde{E}:=(E,\alpha_2\beta_2)
\end{equation}
and 
\begin{equation} \label{plug} 
\begin{split}
\sum\limits_{l\in \mathbb{Z}\setminus\{0\}}  
\frac{\mu(\alpha_2\beta_2/(lE,\alpha_2\beta_2))}{\varphi(\alpha_2\beta_2/(lE,\alpha_2\beta_2))}\cdot \ I_{\mu,\nu}(0,l;\alpha_2\beta_2)= &
\sum\limits_{t|\alpha_2\beta_2/\tilde{E}} \frac{\mu(\alpha_2\beta_2/(t\tilde{E}))}{\varphi(\alpha_2\beta_2/(t\tilde{E}))} \sum\limits_{\substack{l\in \mathbb{Z}\setminus\{0\}\\ (l,\alpha_2\beta_2/\tilde{E})=t}} I_{\mu,\nu}(0,l;\alpha_2\beta_2)\\
= & \sum\limits_{t|\alpha_2\beta_2/\tilde{E}} \frac{\mu(\alpha_2\beta_2/(t\tilde{E}))}{\varphi(\alpha_2\beta_2/(t\tilde{E}))} \sum\limits_{\substack{l'\in \mathbb{Z}\setminus\{0\}\\ (l',\alpha_2\beta_2/(t\tilde{E}))=1}} I_{\mu,\nu}(0,l't;\alpha_2\beta_2)\\
= & \sum\limits_{t|\alpha_2\beta_2/\tilde{E}} \frac{\mu(\alpha_2\beta_2/(t\tilde{E}))}{\varphi(\alpha_2\beta_2/(t\tilde{E}))} \sum\limits_{u|\alpha_2\beta_2/(t\tilde{E})} \mu(u)
\sum\limits_{l''\in \mathbb{Z}\setminus\{0\}} I_{\mu,\nu}(0,l''tu;\alpha_2\beta_2).
\end{split}
\end{equation}
Recalling the definition of the Fourier integral in \eqref{Fourierintegraldef}, we obtain
\begin{equation} \label{comb1}
\begin{split}
\sum\limits_{l''\in \mathbb{Z}\setminus\{0\}} I_{\mu,\nu}(0,l''tu;\alpha_2\beta_2)=&
\sum\limits_{l''\in \mathbb{Z}} I_{\mu,\nu}(0,l''tu;\alpha_2\beta_2) - I_{\mu,\nu}(0,0;\alpha_2\beta_2)\\
= &  \sum\limits_{l''\in \mathbb{Z}} \int\limits_{\mathbb{R}}\int\limits_{\mathbb{R}} \Phi_{\mu,\nu}\left(x,y\right)e\left(-\frac{l''tuy}{\alpha_2\beta_2}\right)dxdy -I_{\mu,\nu}(0,0;\alpha_2\beta_2).
\end{split}
\end{equation}
Let
$$
\omega(y):= \int\limits_{\mathbb{R}} \Phi_{\mu,\nu}\left(x,y\right)dx.
$$
Then
$$
\int\limits_{\mathbb{R}}\int\limits_{\mathbb{R}} \Phi_{\mu,\nu}\left(x,y\right)e\left(-\frac{l''tuy}{\alpha_2\beta_2}\right)dxdy=\hat{\omega}\left(\frac{l''tu}{\alpha_2\beta_2}\right),
$$
the Fourier transform. Applying Poisson summation backward, we get
\begin{equation*}
\begin{split}
\sum\limits_{l''\in \mathbb{Z}} \int\limits_{\mathbb{R}}\int\limits_{\mathbb{R}} \Phi_{\mu,\nu}\left(x,y\right)e\left(-\frac{l''tuy}{\alpha_2\beta_2}\right)dxdy= & \sum\limits_{l''\in \mathbb{Z}}\hat{\omega}\left(\frac{l''tu}{\alpha_2\beta_2}\right)
= \frac{\alpha_2\beta_2}{tu}\sum\limits_{k\in \mathbb{Z}} \omega\left(\frac{k\alpha_2\beta_2}{tu}\right)\\ = & 
\frac{\alpha_2\beta_2}{tu}\sum\limits_{k\in \mathbb{Z}} \int\limits_{\mathbb{R}} \Phi_{\mu,\nu}\left(x,\frac{k\alpha_2\beta_2}{tu}\right)dx.
\end{split}
\end{equation*}
Combining this with \eqref{comb1}, we obtain
\begin{equation} \label{comb2}
\sum\limits_{l''\in \mathbb{Z}\setminus\{0\}} I_{\mu,\nu}(0,l''tu;\alpha_2\beta_2)
= 
\frac{\alpha_2\beta_2}{tu}\sum\limits_{k\in \mathbb{Z}} \int\limits_{\mathbb{R}} \Phi_{\mu,\nu}\left(x,\frac{k\alpha_2\beta_2}{tu}\right)dx-I_{\mu,\nu}(0,0;\alpha_2\beta_2).
\end{equation}
Due to our constraint $M\le F_3(x,y)\le 2M$, for every given $k$, the $x$-integral on the right-hand side of \eqref{comb2} ranges over an interval of length 
\begin{equation} \label{xlength}
\ll \frac{M}{\alpha_2B}.
\end{equation}
(Outside this interval, the integrand vanishes.)
Moreover, in view of the inequality for $|y|$ in \eqref{xysizes}, the length of the relevant $k$-interval on the right-hand side of \eqref{comb2} is 
$$
\ll \frac{g\alpha_2}{\beta_1s}\cdot \frac{tu}{\alpha_2\beta_2}= \frac{tug}{\beta_1\beta_2s},
$$
and hence the number of $k$'s which contribute is 
\begin{equation} \label{klength}
\ll \frac{tug}{\beta_1\beta_2s}+1.
\end{equation}
Using the definition of $B$ in \eqref{ABCDEdef}, the product of the two quantities in \eqref{xlength} and \eqref{klength} equals
$$
\frac{M}{\alpha_2B}\cdot \left(\frac{tug}{\beta_1\beta_2s}+1\right)=\frac{M}{m_1m_2v_1v_2\beta_1es}\cdot \frac{tu}{\alpha_2\beta_2}+\frac{M}{m_1m_2v_1v_2eg\alpha_2}.
$$
From this, \eqref{trivial} and \eqref{comb2}, we obtain the bound
\begin{equation} \label{comb3}
\sum\limits_{l''\in \mathbb{Z}\setminus\{0\}} I_{\mu,\nu}(0,l''tu;\alpha_2\beta_2)
\ll\frac{M}{m_1m_2v_1v_2\beta_1es}+\frac{M\beta_2}{m_1m_2v_1v_2egtu}=
\frac{M}{m_1m_2v_1v_2\beta_1es}\left(1+\frac{\beta_1\beta_2s}{gtu}\right).
\end{equation}
Plugging \eqref{comb3} into the last line of \eqref{plug}, we see that
\begin{equation} \label{firstbound}
\begin{split}
& \sum\limits_{l\in \mathbb{Z}\setminus\{0\}}  
\frac{\mu(\alpha_2\beta_2/(lE,\alpha_2\beta_2))}{\varphi(\alpha_2\beta_2/(lE,\alpha_2\beta_2))}\cdot \ I_{\mu,\nu}(0,l;\alpha_2\beta_2)\\ \ll & \frac{M}{m_1m_2v_1v_2\beta_1es}
\left(\sum\limits_{\substack{t|\alpha_2\beta_2/\tilde{E}}} \frac{d(\alpha_2\beta_2/(t\tilde{E}))}{\varphi(\alpha_2\beta_2/(t\tilde{E}))} + \frac{\beta_1\beta_2s}{g}\cdot \frac{\sigma_{-1}(\alpha_2\beta_2/\tilde{E})}{\varphi(\alpha_2\beta_2/\tilde{E})}\right)\\
\ll & \frac{M}{m_1m_2v_1v_2\beta_1es}
\left(d(\alpha_2\beta_2)+\frac{\tilde{E}}{\alpha_2}\right) (\log\log n)^2,
\end{split}
\end{equation}
where we make use of the fact that $\beta_1s/g\le 1$ since $\beta_1|g/s$. Here we recall the notations
$$
d(m)=\sum\limits_{a|m} 1 \quad \mbox{and} \quad \sigma_{-1}(m)=\sum\limits_{a|m} \frac{1}{a}.
$$
The above bound \eqref{firstbound} will be useful for small $\alpha_2$. For large $\alpha_2$, we estimate the above quantity in the different way: Using \eqref{non-trivial} and being mindful of the restriction on $l$ in \eqref{wlcon}, we deduce that
\begin{equation} \label{secondbound}
\begin{split}
&\sum\limits_{l\in \mathbb{Z}\setminus\{0\}}  
\frac{\mu(\alpha_2\beta_2/(lE,\alpha_2\beta_2))}{\varphi(\alpha_2\beta_2/(lE,\alpha_2\beta_2))}\cdot \ I_{\mu,\nu}(0,l;\alpha_2\beta_2)\\ \ll & \frac{M}{m_1m_2v_1v_2\beta_1es}\cdot \frac{\beta_1\beta_2s}{g}\cdot (\log n)^2\cdot \sum\limits_{t|\alpha_2\beta_2/\tilde{E}}  \sum\limits_{u|\alpha_2\beta_2/(t\tilde{E})} \frac{1}{tu\varphi(\alpha_2\beta_2/(t\tilde{E}))}\\
\ll & \frac{M}{m_1m_2v_1v_2\beta_1es}\cdot \frac{\beta_1\beta_2s}{g}\cdot (\log n)^2\cdot  \frac{\sigma_{-1}(\alpha_2\beta_2/\tilde{E})d(\alpha_2\beta_2/\tilde{E})}{\varphi(\alpha_2\beta_2/\tilde{E})}\\
\ll & \frac{M}{m_1m_2v_1v_2\beta_1es}\cdot  \frac{d(\alpha_2\beta_2)\tilde{E}}{\alpha_2} \cdot (\log n)^2(\log \log n)^2,
\end{split}
\end{equation} 
again making use of $\beta_1s/g\le 1$.

In the following, we will frequently use the bounds
$$
\sum\limits_{r|m} \frac{1}{r} \ll \log \log m, \quad  \sum\limits_{r|m} \frac{d(r)}{r}\le \left(\sum\limits_{r|m} \frac{1}{m}\right)^2\ll (\log\log m)^2,\quad  \sum\limits_{r|m} \frac{d_3(r)}{r}\le \left(\sum\limits_{r|m} \frac{1}{m}\right)^3\ll (\log\log m)^3.
$$
Let $T>0$ be a parameter, to be optimized later. Recall \eqref{S101start} and \eqref{tildeE}. Employing  \eqref{firstbound}, the contribution of $\alpha_2\le T$ to $S_{1,0,1}^{\sharp,-}$ is bounded  by
\begin{equation*}
\begin{split}
\ll & M\sum\limits_{\substack{e,s,v_1,v_2,m_1,m_2,\beta_1|n \\ g|e}}\frac{1}{m_1m_2v_1v_2\beta_1es} \sum\limits_{E_1|g} \sum\limits_{E_2|n} \sum\limits_{\substack{\beta_2|g\\ E_1|\beta_2}} \frac{1}{\beta_2}  \sum\limits_{\substack{\alpha_2\in \mathbb{N}\\ E_2|\alpha_2\\ \alpha_2\le T}} \left(\frac{d(\alpha_2\beta_2)}{\alpha_2}+\frac{E_1E_2}{\alpha_2^2}\right) (\log\log n)^2\\
\ll &  M(\log \log n)^{9} \sum\limits_{g|n} \Bigg(\Bigg(\sum\limits_{E_1|g}  \sum\limits_{\substack{\beta_2|g\\ E_1|\beta_2}} \frac{d(\beta_2)}{g\beta_2}\Bigg) \Bigg(\sum\limits_{E_2|n} \sum\limits_{\substack{\alpha_2\in \mathbb{N}\\ E_2|\alpha_2\\ \alpha_2\le T}} \frac{d(\alpha_2)}{\alpha_2}\Bigg)+\Bigg(\sum\limits_{E_1|g}  \sum\limits_{\substack{\beta_2|g\\ E_1|\beta_2}} \frac{E_1}{g\beta_2}\Bigg) \Bigg(\sum\limits_{E_2|n} \sum\limits_{\substack{\alpha_2\in \mathbb{N}\\ E_2|\alpha_2\\ \alpha_2\le T}} \frac{E_2}{\alpha_2^2}\Bigg)\Bigg),
\end{split}
\end{equation*}
where we have written $e=fg$ and proceeded similarly as in section \ref{themainterm} to obtain the factor $(\log \log n)^9$. For the above sums involving $\alpha_2$ and $\beta_2$, we have the estimates
\begin{equation*}
\sum\limits_{E_1|g}  \sum\limits_{\substack{\beta_2|g\\ E_1|\beta_2}} \frac{d(\beta_2)}{g\beta_2}\ll \frac{(\log \log n)^4}{g},\quad \sum\limits_{E_2|n}\sum\limits_{\substack{\alpha_2\in \mathbb{N}\\ E_2|\alpha_2\\ \alpha_2\le T}} \frac{d(\alpha_2)}{\alpha_2}\ll (\log T)^2(\log\log n)^2,
\end{equation*}
\begin{equation*} 
\sum\limits_{E_1|g}  \sum\limits_{\substack{\beta_2|g\\ E_1|g}} \frac{E_1}{g\beta_2} \ll \frac{d(g)(\log\log n)}{g},\quad  \sum\limits_{E_2|n} \sum\limits_{\substack{\alpha_2\in \mathbb{N}\\ E_2|\alpha_2\\ \alpha_2\le T}} \frac{E_2}{\alpha_2^2}\ll \log\log n. 
\end{equation*}
So altogether, the contribution of $\alpha_2\le T$ to $S_{1,0,1}^{\sharp,-}$ is dominated by 
\begin{equation} \label{q1}
\ll (\log T)^2M(\log \log n)^{16}.
\end{equation}
Employing  \eqref{secondbound}, the contribution of $\alpha_2> T$ to $S_{1,0,1}^{\sharp,-}$ is bounded  by
\begin{equation*}
\begin{split}
\ll & M(\log n)^2(\log\log n)^2\sum\limits_{\substack{e,s,v_1,v_2,m_1,m_2,\beta_1|n \\ g|e}}\frac{1}{m_1m_2v_1v_2\beta_1es} \sum\limits_{E_1|g} \sum\limits_{E_2|n} \sum\limits_{\substack{\beta_2|g\\ E_1|\beta_2}} \frac{1}{\beta_2}  \sum\limits_{\substack{\alpha_2\in \mathbb{N}\\ E_2|\alpha_2\\ \alpha_2> T}}  \frac{d(\alpha_2\beta_2)E_1E_2}{\alpha_2^2} \\
\ll &  M(\log n)^2(\log \log n)^{9} \sum\limits_{g|n} \Bigg(\sum\limits_{E_1|g}  \sum\limits_{\substack{\beta_2|g\\ E_1|\beta_2}} \frac{E_1d(\beta_2)}{g\beta_2}\Bigg) \Bigg(\sum\limits_{E_2|n} \sum\limits_{\substack{\alpha_2\in \mathbb{N}\\ E_2|\alpha_2\\ \alpha_2> T}} \frac{E_2d(\alpha_2)}{\alpha_2^2}\Bigg)\Bigg),
\end{split}
\end{equation*}
where we have again written $e=fg$ and proceeded similarly as in section \ref{themainterm} to obtain the factor $(\log \log n)^9$. For the above sums involving $\alpha_2$ and $\beta_2$, we have the estimates
\begin{equation*}
\sum\limits_{E_1|g}  \sum\limits_{\substack{\beta_2|g\\ E_1|\beta_2}} \frac{E_1d(\beta_2)}{g\beta_2}\ll \frac{d_3(g)(\log \log n)^2}{g},\quad 
\sum\limits_{E_2|n} \sum\limits_{\substack{\alpha_2\in \mathbb{N}\\ E_2|\alpha_2\\ \alpha_2> T}} \frac{E_2d(\alpha_2)}{\alpha_2^2}\ll \frac{(\log T)(\log\log n)^2}{T}.
\end{equation*}
So altogether, the contribution of $\alpha_2> T$ to $S_{1,0,1}^{\sharp,-}$ is dominated by 
\begin{equation} \label{q2}
\ll \frac{M(\log n)^2(\log T)(\log \log n)^{16}}{T}.
\end{equation}
Now we choose $T:=(\log n)^2$ to balance the above bounds \eqref{q1} and \eqref{q2} and obtain the final estimate
\begin{equation} \label{S101end}
S_{1,0,1}^{\sharp,-}\ll M(\log \log n)^{18}. 
\end{equation}

\section{Error contribution of $T_{\mu,\nu}^{1,0}$}
Next we deal with the contribution of the term $T_{\mu,\nu}^{1,0}$ in \eqref{T10} to $S^{\sharp,-}_1$. Taking our restriction \eqref{alpha2restrict} into account, this contribution equals
\begin{equation*}
\begin{split}
S_{1,1,0}^{\sharp,-}= & \sum\limits_{\mu=1}^2\sum\limits_{\nu=1}^2 (-1)^{\mu+\nu}\sum\limits_{\substack{e\in \mathbb{N}\\ e|n}}  \sum\limits_{\substack{g\in \mathbb{N}\\ g|e}} \sum\limits_{\substack{s\in \mathbb{N}\\ s|g}}   \sum\limits_{\substack{v_1,v_2\in \mathbb{N}\\ (v_1,v_2)=1\\ v_1|g/s \\ v_2|g/s\\
v_1v_2|n/(es)}}
\sum\limits_{\substack{\beta_1,\beta_2\in \mathbb{N}\\ (\beta_2v_1,\beta_1v_2)=1\\ 
\beta_2|g/(sv_1) \\ \beta_1|g/(sv_2)}}
 \sum\limits_{\substack{m_1,m_2\in \mathbb{N}\\ m_1m_2|n/(esv_1v_2)\\ (m_1,m_2)=1\\ (m_1,g/(\beta_1v_2s))=1\\ (m_2,g/(\beta_2v_1s))=1}} \mu(m_1\beta_1v_2s)\mu(m_2\beta_2v_1s) \times\\
&
\sum\limits_{\substack{\alpha_2\in \mathbb{N}\\ (\alpha_2,2\beta_1)=1\\ \alpha_2\le 2\sqrt{X}/(v_2m_1\sqrt{eg})}}
\frac{\varphi(\alpha_2\beta_2)}{(\alpha_2\beta_2)^2}\sum\limits_{w\in \mathbb{Z}\setminus \{0\}}  
\frac{\mu(\alpha_2\beta_2/(w,\alpha_2\beta_2))}{\varphi(\alpha_2\beta_2/(w,\alpha_2\beta_2)}\cdot \ I_{\mu,\nu}(w,0;\alpha_2\beta_2).
\end{split}
\end{equation*}
Using the inequality 
$$
\frac{1}{\varphi(\alpha_2\beta_2/(w,\alpha_2\beta_2))}\le \frac{(w,\alpha_2\beta_2)}{\varphi(\alpha_2\beta_2)},
$$
it follows that
$$
S_{1,1,0}^{\sharp,-}\ll \sum\limits_{\substack{e,s,v_1,v_2,\beta_1,m_1,m_2|n\\ g|e}} \sum\limits_{\beta_2|n} \frac{1}{\beta_2^2}\sum\limits_{\substack{\alpha_2\in \mathbb{N} \\ \alpha_2\le 2\sqrt{X}/(v_2m_1\sqrt{eg})}} \frac{1}{\alpha_2^2}  \sum\limits_{w\in \mathbb{Z}\setminus\{0\}} (w,\alpha_2\beta_2) |I_{\mu,\nu}(w,0;\alpha_2\beta_2)|.
$$
We recall from section \ref{intestis} that the integral $I_{\mu,\nu}(w,0;\alpha_2\beta_2)$ is negligible if $w$ satisfies the relevant inequality in \eqref{wlneg}. Otherwise, we use \eqref{trivial} to estimate it and again write $e=fg$ to get 
\begin{equation*}
\begin{split}
S_{1,1,0}^{\sharp,-}\ll & M \sum\limits_{f,g,s,v_1,v_2,\beta_1,m_1,m_2|n} \frac{1}{ m_1m_2v_1v_2\beta_1sfg} \sum\limits_{\beta_2|n} \frac{1}{\beta_2^2}\sum\limits_{\substack{\alpha_2\in \mathbb{N} \\ \alpha_2\le 2\sqrt{X}/(v_2m_1g\sqrt{f})}} \frac{1}{\alpha_2^2} \times\\ & \sum\limits_{0<|w|\le n^{\varepsilon}m_1m_2v_1v_2fg^2\beta_2\alpha_2^2/Y}  (w,\alpha_2\beta_2)+O\left(n^{-2022}\right).
\end{split}
\end{equation*}
This implies
\begin{equation} \label{S110end}
S_{1,1,0}^{\sharp,-}\ll \frac{Mn^{1/2+2\varepsilon}}{Y},
\end{equation}
where we have used the well-known bound 
\begin{equation} \label{divbound}
\sum\limits_{0<r\le R} (N,r)\ll RN^{\varepsilon}
\end{equation}
which is valid for all $R>0$ and $N\in \mathbb{N}$. The above bound \eqref{S110end} beats $S_{1,1,0}^{\sharp,-}\ll M$ if
\begin{equation} \label{Mcondition2}
Y\ge n^{1/2+2\varepsilon}.
\end{equation}

\section{Error contribution of $T_{\mu,\nu}^{1,1}$ via Weil bound for Kloosterman sums}
We are left with the contribution of the term $T_{\mu,\nu}^{1,1}$ in \eqref{T11} to $S^{\sharp,-}_1$. Again taking our restriction \eqref{alpha2restrict} into account, this contribution equals
\begin{equation} \label{startingpoint}
\begin{split}
S_{1,1,1}^{\sharp,-}= & \sum\limits_{\mu=1}^2\sum\limits_{\nu=1}^2 (-1)^{\mu+\nu}\sum\limits_{\substack{e\in \mathbb{N}\\ e|n}}  \sum\limits_{\substack{g\in \mathbb{N}\\ g|e}} \sum\limits_{\substack{s\in \mathbb{N}\\ s|g}}   \sum\limits_{\substack{v_1,v_2\in \mathbb{N}\\ (v_1,v_2)=1\\ v_1|g/s \\ v_2|g/s\\
v_1v_2|n/(es)}}
\sum\limits_{\substack{\beta_1,\beta_2\in \mathbb{N}\\ (\beta_2v_1,\beta_1v_2)=1\\ 
\beta_2|g/(sv_1) \\ \beta_1|g/(sv_2)}}
 \sum\limits_{\substack{m_1,m_2\in \mathbb{N}\\ m_1m_2|n/(esv_1v_2)\\ (m_1,m_2)=1\\ (m_1,g/(\beta_1v_2s))=1\\ (m_2,g/(\beta_2v_1s))=1}} \mu(m_1\beta_1v_2s)\mu(m_2\beta_2v_1s) \times\\ & \sum\limits_{\substack{\alpha_2\in \mathbb{N}\\ (\alpha_2,2\beta_1)=1\\ \alpha_2\le 2\sqrt{X}/(v_2m_1\sqrt{eg})}}\frac{1}{(\alpha_2\beta_2)^2} \sum\limits_{w\in \mathbb{Z}\setminus\{0\}}\sum\limits_{l\in \mathbb{Z}\setminus\{0\}}  S(w,-l E\overline{\mu\nu\beta_1};\alpha_2\beta_2) I_{\mu,\nu}(w,l;\alpha_2\beta_2),
\end{split}
\end{equation}  
where we recall \eqref{Edef1} again. The easiest way to estimate the above is to employ the Weil bound
$$
S(a,b;c)\ll (a,b,c)c^{1/2+\varepsilon}  
$$
for Kloosterman sums and then add trivially. Using this bound in conjuction with \eqref{trivial} and our restrictions in \eqref{wlcon}, we obtain
\begin{equation*} 
\begin{split}
S_{1,1,1}^{\sharp,-}
\ll & M  \sum\limits_{\substack{g,\beta_1|n\\ m_1m_2v_1v_2se|n}} \frac{1}{ m_1m_2v_1v_2\beta_1se}\sum\limits_{\beta_2|g} \frac{1}{\beta_2^{3/2-\varepsilon}}\sum\limits_{\substack{\alpha_2\in \mathbb{N} \\ \alpha_2\le 2\sqrt{X}/(v_2m_1\sqrt{eg})}} \frac{1}{\alpha_2^{3/2-\varepsilon}} \times\\ & \sum\limits_{0<|w|\le n^{\varepsilon}m_1m_2v_1v_2eg\beta_2\alpha_2^2/Y} (w,\alpha_2\beta_2) \sum\limits_{0<|l|\le \beta_1\beta_2sn^{1+\varepsilon}/(gY)} 1+O\left(n^{-2022}\right).
\end{split}
\end{equation*}
This implies 
\begin{equation} \label{S111end} 
S_{1,1,1}^{\sharp,-}\ll \frac{Mn^{7/4+10\varepsilon}}{Y^2},
\end{equation}
where we have used \eqref{divbound} again. The above bound \eqref{S111end} beats $S_{1,1,1}^{\sharp,-}\ll M$ if 
\begin{equation} \label{Mcondition3}
Y\ge n^{7/8+5\varepsilon}.
\end{equation}

\section{Error contribution of $T_{\mu,\nu}^{1,1}$ via averages of Kloosterman sums}
It is interesting to see how far we can get by utilizing the averaging over $w,l,\alpha_2$ in estimating the error term $S_{1,1,1}^{\sharp,-}$ involving Kloosterman sums. Unfortunately, it will turn out that we are not able to beat the bound \eqref{S111end} obtained in the previous section. One difficulty is that the parameter $E$ can be as large as $2n$, the other is the occurrence of the factor 
$$
\left(\frac{n^{1+\varepsilon}}{Y}\right)^{i+j+k}
$$
on the right-hand side of \eqref{partialderivativesbound}. To exploit the said averaging, a most suitable tool appears to be \cite[Lemma 9]{Mat} which is a generalization of \cite[Theorem 9]{DeIw}. However, we need to make a number of adjustments. In fact, we shall use the following slight generalization of \cite[Lemma 9]{Mat}.

\begin{Lemma} \label{Matenh} Let $r$, $s$ and $d$ be positive pairwise coprime integers with $r$ and $s$ square-free. Let $M$, $N$ and $C$ be positive numbers, $U\ge 1$ and $g$ a real-valued infinitely differentiable function supported on $[M,2M]\times [N,2N]\times [C,2C]$ such that
\begin{equation} \label{partialcondit2}
\frac{\partial^{j+k+l} g}{\partial m^j \partial n^k \partial c^l} \le U^{j+k+l} M^{-j}N^{-k}C^{-l}\quad \mbox{for }0\le j,k,l\le 2.
\end{equation}
Let $X_d:=\sqrt{dMN}/(sC\sqrt{r})$. Then for $\eta>0$ and any complex sequences $a_m$ and $b_n$ one has
\begin{equation} \label{kloosterbound}
\begin{split}
& \sum\limits_m a_m \sum\limits_n b_n \sum\limits_{\substack{c\\ (c,r)=1}} g(m,n,c)S(dm\overline{r};\pm n,sc)\\ \ll & \left(U^{7/2}sC+U^3C^{\eta}d^{7/64} sC\sqrt{r}\frac{(1+X_d^{-1})^{7/32}}{1+X_d}\left(1+X_d+\sqrt{\frac{M}{rs}}\right)\left(1+X_d+\sqrt{\frac{N}{rs}}\right)\right)\times\\ &\Big(\sum\limits_m |a_m|^2\Big)^{1/2}\Big(\sum\limits_n |b_n|^2\Big)^{1/2}.
\end{split}
\end{equation}
\end{Lemma}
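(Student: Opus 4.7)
The plan is to deduce the bound from \cite[Lemma 9]{Mat}, which itself rests on \cite[Theorem 9]{DeIw} and the Kuznetsov trace formula; the task is to verify that the proof of \cite[Lemma 9]{Mat} extends to our slightly more general setting and to track the dependence on the auxiliary parameter $U$ through every step.

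The core strategy would proceed as follows. First, I would apply Cauchy--Schwarz to split off the sum $\sum_m a_m (\cdots)$, leaving a diagonal sum in which pairs of Kloosterman sums with shared modulus variable $c$ need to be estimated. I would then open the inner $c$-sum using the Kuznetsov trace formula at level $sr$, taking as test function the map $c\mapsto g(m,n,c)$ for frozen $m,n$ and integrating in $m,n$ afterwards. The smoothness hypothesis \eqref{partialcondit2} with parameter $U$ guarantees that all Bessel-type integral transforms of the test function remain well-controlled; each integration by parts against such a transform costs at most one factor of $U$, and the exponents $U^{7/2}$ and $U^3$ in \eqref{kloosterbound} are meant to reflect the number of such operations needed in the trivial/Weil-bounded branch and in the spectral branch respectively.

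Second, I would bound the resulting spectral sum using the Deshouillers--Iwaniec large-sieve inequalities for Fourier coefficients of Maass forms, holomorphic forms, and Eisenstein series, exactly as in \cite{Mat}. The twist $d$ in $dm\overline{r}$ would be accommodated through Hecke multiplicativity: the Fourier coefficient at $dm$ factors as $\lambda_f(d)\rho_f(m)$ up to the Hecke relations, and the Kim--Sarnak bound $|\lambda_f(d)|\ll d^{7/64+\varepsilon}$ delivers the exponent $7/64$ appearing in \eqref{kloosterbound}. The remaining shape factors $(1+X_d^{-1})^{7/32}/(1+X_d)$ together with $\bigl(1+X_d+\sqrt{M/(rs)}\bigr)\bigl(1+X_d+\sqrt{N/(rs)}\bigr)$ emerge, as in \cite{Mat,DeIw}, from balancing the discrete versus Eisenstein contributions at small versus large spectral parameters and from the support sizes of the sequences $a_m$ and $b_n$.

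The main obstacle is the bookkeeping of derivative losses. One must repeatedly integrate by parts, both in the weight variable $c$ and inside the spectral Bessel transforms, and each operation loses at most a factor of $U$. Verifying that the cumulative loss assembles into exactly $U^{7/2}$ in the diagonal/Weil term and $U^{3}$ in the spectral term --- and not into anything larger --- requires a careful counting of derivatives matched step-by-step against the proof in \cite{Mat}. Once this bookkeeping is in place, the remainder of the argument runs in parallel with the Deshouillers--Iwaniec/Motohashi framework, with only cosmetic changes to accommodate the extra variable in $g$ and the explicit factor $d$ inside the Kloosterman sum.
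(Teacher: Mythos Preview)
Your overall plan---track the parameter $U$ through the proof of \cite[Lemma~9]{Mat}---is the same as the paper's, but you have misidentified the mechanism by which the $U$-factors arise, and this is a genuine gap.

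You cannot feed $c\mapsto g(m,n,c)$ into the Kuznetsov formula: the trace formula requires a test function of the single variable $4\pi\sqrt{mn}/c$, so one must first separate the $(m,n)$-dependence from the $c$-dependence. This is precisely the content of the reduction in \cite[Section~7]{DeIw}, and it is \emph{here}, not in any ``integration by parts against Bessel transforms'', that the $U$-powers are incurred. Concretely, one writes
\[
G(t_1,t_2;x)=\iint g\Bigl(x_1,x_2,\tfrac{4\pi\sqrt{x_1x_2}}{x}\Bigr)e(-t_1x_1-t_2x_2)\,dx_1\,dx_2
\]
and recovers $g$ by Fourier inversion. Under \eqref{partialcondit2} one has $\partial_x^pG\ll U^{p_1+p_2+p}(t_1M)^{-p_1}(t_2N)^{-p_2}X^{-p}MN$ after $p_i$ integrations by parts in $x_i$. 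The renormalised function $f(x)=\epsilon\,U^{-(p_1+p_2+1)}(t_1M)^{p_1}(t_2N)^{p_2}(MN)^{-1}G(t_1,t_2;x)$ then satisfies the hypotheses of \cite[Theorem~8]{DeIw} with the parameter ``$M$'' there equal to $U$; that parameter change is what produces the extra term $U^{1/2}sC$ (their diagonal term is $\sqrt{M}$-weighted). Integrating the resulting bound over $t_1,t_2$ with cut-offs at $|t_i|\le U/M,\ U/N$ costs a further factor $U^2$. The spectral term thus picks up $U^3$ and the new diagonal term $U^{7/2}$; the extension to general $d$ then proceeds as in \cite{Mat}.

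In short, the paper does not re-run Kuznetsov and the large sieve; it applies \cite[Theorem~8]{DeIw} as a black box and only reworks the Fourier-separation step that reduces a general smooth weight $g(m,n,c)$ to a one-variable weight $f(4\pi\sqrt{mn}/c)$. Your sketch skips this step entirely, and the heuristic ``one $U$ per integration by parts in a Bessel kernel'' does not account for the exponents $7/2$ and $3$.
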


\begin{proof}
The case $U=1$ is exactly \cite[Lemma 9]{Mat}. The case $d=1$ of \cite[Lemma 9]{Mat}, in turn, is the content of \cite[Theorem 9]{DeIw}. First, we assume that $d=1$. To understand which change the introduction of the factor $U^{j+k+l}$ in \eqref{partialcondit2} causes, we need to go to \cite[page 269 in section 7]{DeIw} where the proof of \cite[Theorem 13]{DeIw} starts. This theorem implies \cite[Theorem 9]{DeIw}. 

Here the authors reduce general weight functions $g(m,n,c)$ to special weight functions of the form
$$
\tilde{g}(m,n,c)=f\left(\frac{4 \pi\sqrt{mn}}{c}\right),
$$
where $f$ is a $C^2$ class function satisfying
\begin{equation} \label{fconditions}
\mbox{supp}(f)\subseteq [X,8X], \quad ||f||_{\infty}\le 1, \quad ||f'||_1\le 2, \quad ||f''||_1\le 6MX^{-1}
\end{equation}
(see \cite[page 234]{DeIw}). In their setting, the function $g(m,n,c)$ satisfies 
\begin{equation} \label{gcondition}
\frac{\partial^{j+k+l} g}{\partial m^j \partial n^k \partial c^l} \le M^{-j}N^{-k}C^{-l}\quad \mbox{for }0\le j,k,l\le 2.
\end{equation}
Their reduction leads to a function $f$ satisfying the conditions in \eqref{fconditions} with $X=2\pi \sqrt{MN}/C$ and $M=1$. Now they apply \cite[Theorem 8]{DeIw} which is formulated for these special weight functions to derive \cite[Theorem 13]{DeIw}.

To be precise, they define
$$
G(t_1,t_2;x):=\iint g\left(x_1,x_2,\frac{4\pi \sqrt{x_1x_2}}{x}\right)e(-t_1x_1-t_2x_2)dx_1dx_2
$$
which implies 
$$
g\left(x_1,x_2,\frac{4\pi\sqrt{x_1x_2}}{x}\right)=\iint G(t_1,t_2;x)e(t_1x_1+t_2x_2)dt_1dt_2
$$
by the Fourier inversion formula. If $t_1,t_2\not=0$, they integrate $G(t_1,t_2;x)$ by parts $p_1$ times with respect to $x_1$ and $p_2$ times with respect to $x_2$ and then differentiate $p$ times with respect to $x$, getting
\begin{equation*}
\begin{split}
\frac{\partial^p}{\partial x^p} G(t_1,t_2;x)=(2\pi it_1)^{-p_1}(2\pi it_2)^{-p_2} \iint \left(
\frac{\partial^{p_1+p_2+p}}{\partial x_1^{p_1}\partial x_2^{p_2}\partial x^p} g\left(x_1,x_2,\frac{4\pi \sqrt{x_1x_2}}{x}\right)\right)\cdot e(-t_1x_1-t_2x_2)dx_1dx_2.
\end{split}
\end{equation*}
Under the condition in \eqref{gcondition}, this implies
\begin{equation*}
\frac{\partial^p}{\partial x^p} G(t_1,t_2;x)\ll (t_1M)^{-p_1}(t_2M)^{-p_2}(\sqrt{MN}/C)^{-p}MN.
\end{equation*}
Now if $\epsilon>0$ is a small enough absolute constant, the function
$$
f(x)=\epsilon\cdot (t_1M)^{p_1}(t_2N)^{p_2}(MN)^{-1}G(t_1,t_2;x)
$$
indeed satisfies the conditions in \eqref{fconditions} with $X=2\pi \sqrt{MN}/C$ and $M=1$. As already said, the authors continue by applying \cite[Theorem 8]{DeIw} on averages of Kloosterman sums, which is formulated for the said special weight functions. Then they integrate the resulting bound over $t_1$ and $t_2$. In their setting, assuming the condition \eqref{gcondition},  they take $p_{1}=0$ if $|t_1|\le 1/M$, $p_2=0$ if $|t_2|\le 1/N$ and $p_1=p_2=2$ otherwise. This proves \cite[Theorem 13]{DeIw} from which \cite[Theorem 9]{DeIw} follows.

We need to adjust their procedure to weight functions $g(m,n,c)$ which satisfy our more general condition \eqref{partialcondit2}. Under this condition, we get
$$
\frac{\partial^p}{\partial x^p} G(t_1,t_2;x)\ll U^{p_1+p_2+p}(t_1M)^{-p_1}(t_2M)^{-p_2}(\sqrt{MN}/C)^{-p}MN.
$$
Now we define
$$
f(x)=\epsilon\cdot  U^{-(p_1+p_2+1)}(t_1M)^{p_1}(t_2M)^{p_2}(MN)^{-1}G(t_1,t_2;x).
$$
Then $f$ satisfies the conditions in \eqref{fconditions} with the parameters $X=2\pi \sqrt{MN}/C$ and $M=U$. Due to the change of the parameter $M$, the application of \cite[Theorem 8]{DeIw} now creates an additional term of size
\begin{equation} \label{extras}
\sqrt{M}\Big(\sum\limits_m |a_m|^2\Big)^{1/2}\Big(\sum\limits_n |b_n|^2\Big)^{1/2}sC.
\end{equation}
(Here we need to scale by a factor of $sC$.)
As in \cite{DeIw}, we integrate the bound resulting from \cite[Theorem 8]{DeIw} over $t_1$ and $t_2$, where we now take $p_1=0$ if $|t_1|\le U/M$, $p_2=0$ if $|t_2|\le U/N$ and $p_1=p_2=2$ otherwise. This creates an extra factor of $U^2$. After multiplying by this factor, we obtain the result in our Lemma \ref{Matenh} for $d=1$. (Note that we also have to multiply the term in \eqref{extras} by $U^2$.) The extension to general $d$ is then achieved along similar lines as in the proof of \cite[Lemma 9]{Mat}.
\end{proof} 

To apply Lemma \ref{Matenh} to our inner triple sum
$$
\sum\limits_{\substack{\alpha_2\in \mathbb{N}\\ (\alpha_2,2\beta_1)=1\\ \alpha_2\le 2\sqrt{X}/(v_2m_1\sqrt{eg})}}\frac{1}{(\alpha_2\beta_2)^2} \sum\limits_{w\in \mathbb{Z}\setminus\{0\}}\sum\limits_{l\in \mathbb{Z}\setminus\{0\}}  S(w,-lE\overline{\mu\nu\beta_1};\alpha_2\beta_2) I_{\mu,\nu}(w,l;\alpha_2\beta_2)
$$
in \eqref{startingpoint},
we need to check the conditions in this lemma and make further small adjustments. First, we re-write our Kloosterman sum in the form $S(wE\overline{\mu\nu\beta_1},-l;\alpha_2\beta_2)$. Now we require that the parameters $E\overline{\mu}$, $\nu\beta_1$ and $\beta_2$ are pairwise coprime and square-free. Recall that $\mu=1,2$ and the definition of $E$ in \eqref{Edef1}. For $E\overline{\mu}$ to be square-free, we need to make an extra assumption, namely that $n$ is {\bf square-free}. Further, $\beta_1$ and $\beta_2$ may be assumed to be square-free because they appear inside a M\"obius function. Since $\nu=1,2$ and $\beta_1$ is odd, $\nu\beta_1$ is square-free as well. Further, $(\nu\beta_1,\beta_2)=1$ by our summation conditions on $\beta_1$ and $\beta_2$. Finally, since $n$ is square-free and we have
$\beta_1,\beta_2|e$ and $e|n$,  it follows that $(n/e,\beta_1)=1=(n/e,\beta_2)$ and hence $(E\overline{\mu},\beta_1)=1=(E\overline{\mu},\beta_2)$. Therefore, the only problematic case occurs when $\mu=1$ and $\nu=2$ in which case we have $(E\overline{\mu},\nu\beta_1)=2$. However, in this case we can replace the triple of parameters by $E\overline{\nu}$, $\beta_1$ and $\beta_2$. So all requirements are fulfilled in each case. 

Henceforth, for simplicity, we deal only with the case $\mu=1=\nu$, all other cases being similar. Also, we consider only the contribution of positive $w$ and $l$. The contributions of the cases i) $w>0$, $l<0$, ii) $w<0$, $l>0$, iii) $w<0$, $l<0$ can be dealt with in a similar way. Next, to apply Lemma \ref{Matenh}, we need to use a smooth partition of unity to divide the sum in question into $O\left(n^{\varepsilon}\right)$ subsums of the form
\begin{equation} \label{SWLQ}
\Sigma(W,L,Q)=\sum\limits_{w} \sum\limits_{l}  \sum\limits_{\substack{\alpha_2\\ (\alpha_2,2\beta_1)=1}}\frac{1}{(\alpha_2\beta_2)^2} 
S(wE\overline{\beta_1},-l;\alpha_2\beta_2)\psi(w,l,\alpha_2)I_{1,1}(w,l;\alpha_2\beta_2),
\end{equation}
where $\psi(w,l,\alpha_2)$ is supported in $[W,2W]\times [L,2L]\times [Q,2Q]$ with 
$$
1/2\le W\ll \frac{n^{\varepsilon}m_1m_2v_1v_2eg\beta_2Q^2}{Y}, \quad 1/2\le L\ll \frac{\beta_1\beta_2sn^{1+\varepsilon}}{gY}, \quad 1/2\le Q\ll \frac{\sqrt{X}}{v_2m_1\sqrt{eg}}
$$
and satisfies 
$$
\frac{\partial^{i+j+k}}{\partial w^i \partial l^j \partial \alpha_2^k} \psi(w,l,\alpha_2) \ll  _{i,j,k} W^{-i}L^{-j}Q^{-k}.
$$

A small issue is the occurance of the summation condition $(\alpha_2,2)=1$ on the right-hand side of \eqref{SWLQ}. This can be rectified by writing
\begin{equation*}
\begin{split}
\Sigma(W,L,Q)= &\sum\limits_{w} \sum\limits_{l}  \sum\limits_{\substack{\alpha_2\\ (\alpha_2,\beta_1)=1}}\frac{1}{(\alpha_2\beta_2)^2} 
S(wE\overline{\beta_1},-l;\alpha_2\beta_2)\psi(w,l,\alpha_2)I_{1,1}(w,l;\alpha_2\beta_2)-\\
& \sum\limits_{w} \sum\limits_{l}  \sum\limits_{\substack{\alpha_2\\ (\alpha_2,\beta_1)=1}}\frac{1}{(2\alpha_2\beta_2)^2} 
S(wE\overline{\beta_1},-l;2\alpha_2\beta_2)\phi(w,l,2\alpha_2)I_{1,1}(w,l;2\alpha_2\beta_2).
\end{split}
\end{equation*}
In the following, we will only deal with the contribution of 
\begin{equation*} 
\Sigma'(W,L,Q)= \sum\limits_{w} \sum\limits_{l}  \sum\limits_{\substack{\alpha_2\\ (\alpha_2,\beta_1)=1}}\frac{1}{(\alpha_2\beta_2)^2} 
S(wE\overline{\beta_1},-l;\alpha_2\beta_2)\psi(w,l,\alpha_2)I_{1,1}(w,l;\alpha_2\beta_2),
\end{equation*}
the other contribution being similar. 

Now we are in a position to apply Lemma \ref{Matenh} to $\Sigma'(W,L,Q)$ above. Taking 
$$
U:=\frac{n^{1+\varepsilon}}{Y} \quad \mbox{and} \quad g(w,l,\alpha_2):=\frac{Q^2 \psi(w,l,\alpha_2)I_{1,1}(w,l;\alpha_2\beta_2)}{C_0\alpha_2^2M/(m_1m_2v_1v_2\beta_1se)}
$$
for a suitable absolute constant $C_0>0$, we have 
$$
\left|\frac{\partial^{i+j+k}}{\partial w^i\partial l^j \partial \alpha_2^k} g(w,l,\alpha_2)\right| \le U^{i+j+k} w^{-i}l^{-j}\alpha_2^{-k}  \quad \mbox{for } 0\le i,j,k\le 2
$$
using \eqref{partialderivativesbound},
and therefore Lemma \ref{Matenh} yields
\begin{equation} \label{yield}
\begin{split}
\Sigma'(W,L,Q)
\ll &
{\frac{M}{m_1m_2v_1v_2\beta_1se}} \cdot \left(\frac{n^{1+\varepsilon}}{Y}\right)^{7/2}\cdot \frac{1}{Q\beta_2}\cdot (WL)^{1/2}+\\ &
\frac{M}{m_1m_2v_1v_2\beta_1se} \cdot \left(\frac{n^{1+\varepsilon}}{Y}\right)^3\cdot \frac{1}{Q\beta_2} \cdot 
n^{\varepsilon}E^{7/64} \sqrt{\beta_1}\frac{(1+Z^{-1})^{7/32}}{1+Z}\times \\ &\left(1+Z+\sqrt{\frac{W}{\beta_1\beta_2}}\right)\left(1+Z+\sqrt{\frac{L}{\beta_1\beta_2}}\right)(WL)^{1/2},
\end{split}
\end{equation}
where 
$$
Z:=\frac{\sqrt{EWL}}{\beta_2Q\sqrt{\beta_1}}. 
$$
The right-hand side above reaches its maximum when 
$$
W\asymp \frac{n^{\varepsilon}m_1m_2v_1v_2eg\beta_2Q^2}{Y}, \quad L\asymp \frac{\beta_1\beta_2sn^{1+\varepsilon}}{gY}, \quad Q\asymp \frac{\sqrt{n}}{v_2m_1\sqrt{eg}},
$$
in which case we calculate
$$
WL \asymp \frac{n^{2+2\varepsilon}m_2v_1\beta_1\beta_2^2s}{Y^2v_2m_1g}, \quad Z\asymp \frac{n^{1+\varepsilon}}{Y}, \quad \sqrt{\frac{W}{\beta_1\beta_2}}\asymp
\sqrt{\frac{n^{1+\varepsilon}m_2v_1}{Yv_2m_1\beta_1}}, \quad \sqrt{\frac{L}{\beta_1\beta_2}}\asymp \sqrt{\frac{n^{1+\varepsilon}s}{Yg}}. 
$$
From $s|g$, it follows that 
$$
\sqrt{\frac{L}{\beta_1\beta_2}}\ll Z.
$$
Using $E\le n$, \eqref{yield} simplifies into 
\begin{equation} \label{yield2}
\begin{split}
\Sigma'(W,L,Q) \ll & \frac{M}{m_1m_2v_1v_2\beta_1se} \cdot\left(\frac{n^{1+\varepsilon}}{Y}\right)^{7/2}\cdot \frac{v_2m_1\sqrt{eg}}{\beta_2 \sqrt{n}} \cdot
\frac{n^{1+\varepsilon}}{Y}\cdot \sqrt{\frac{m_2v_1\beta_1\beta_2^2s}{v_2m_1g}}+\\
&  \frac{M}{m_1m_2v_1v_2\beta_1se} 
\cdot \left(\frac{n^{1+\varepsilon}}{Y}\right)^3\cdot\frac{v_2m_1\sqrt{eg}}{\beta_2 \sqrt{n}} \cdot
n^{7/64+\varepsilon} \sqrt{\beta_1}\left(\frac{n^{1+\varepsilon}}{Y}+\sqrt{\frac{n^{1+\varepsilon}m_2v_1}{Yv_2m_1\beta_1}}\right)\cdot \frac{n^{1+\varepsilon}}{Y}\cdot \sqrt{\frac{m_2v_1\beta_1\beta_2^2s}{v_2m_1g}}\\
\ll & Mn^{6\varepsilon}\left(\frac{n^{4}}{Y^{9/2}}+\frac{n^{9/2+7/64}}{Y^5}\right) 
\end{split}
\end{equation}
and hence
\begin{equation} \label{S111}
S_{1,1,1}^{\sharp,-}\ll Mn^{10\varepsilon}\left(\frac{n^{4}}{Y^{9/2}}+\frac{n^{9/2+7/64}}{Y^5}\right).  
\end{equation}
We note that the exponent $7/64$ comes from the Kim-Sarnak bound for the least eigenvalue associated to Maass forms and can be dropped under Selberg's eigenvalue conjecture. The above bound \eqref{S111} beats $S_{1,1,1}^{\sharp,-}\ll M$ if 
$$
Y\ge n^{9/10+7/320+2\varepsilon}.
$$
This is a stronger condition than \eqref{Mcondition3}. \\ \\
{\bf Comment:} To get into a range where one beats \eqref{Mcondition3} (at least conditionally under Selberg's eigenvalue conjecture) one would need to reduce the above bound for $S_{1,1,1}^{\sharp,-}$  by a factor of more than $U=n^{1+\varepsilon}/Y$.  At this point, the author is not able to achieve this. 

\section{Main result}
In view of the estimates \eqref{asymptoticestimate} and \eqref{S101end} and the conditions \eqref{Mbound}, \eqref{Mcondition2} and \eqref{Mcondition3}, our work thus far yields the asymptotic formula 
$$
S_1^{\sharp,-}=\frac{2}{\pi} \mathcal{P}(n)\hat{\Phi}(0)M\log n +O\left(M(\log\log n)^{18}\right)
$$
under the conditions 
$$ 
n^{7/8+\varepsilon}\le Y\le n(\log n)^{-1} \quad \mbox{and} \quad Y\le M.
$$
A parallel treatment for $S_1^{\sharp,+}$ gives the same asymptotic under the above condition on $M$. Recalling \eqref{sandwich}, we then get the same asymptotic for $S_1^{\sharp}$. As remarked in section \ref{ini}, the same asymptotic will also hold for $S_1^{\flat}$, and using \eqref{2S1} and \eqref{dec}, we thus have 
$$
S=\frac{8}{\pi} \mathcal{P}(n)\hat{\Phi}(0)M\log n +O\left(M(\log \log n)^{18}\right)
$$ 
with $S$ as defined in \eqref{S}.
Using the properties of the weight functions laid out at the beginning of section \ref{ini}, we also find that 
\begin{equation*}
\begin{split}
\Bigg| S- \sum\limits_{\substack{(x_1,x_2,x_3)\in \mathbb{N}^3\\ x_1^2+x_2^2-x_3^2=n^2\\ 2|x_3}} \Phi(x_3)\Bigg|\ll \sum\limits_{1\le x\le Y} d(n^2-x^2)\ll &
 \sum\limits_{1\le x\le Y} d(n-x)d(n+x)\\ \ll & 
\left(\sum\limits_{1\le x\le Y} d^2(n-x)\right)^{1/2}\left(\sum\limits_{1\le x\le Y} d^2(n+x)\right)^{1/2}.
\end{split}
\end{equation*}
It is due to Ramanujan \cite{Ram} that
$$
\sum\limits_{r\le R} d^2(r)=R\cdot \mbox{Pol}(\log R)+O\left(R^{3/5+\varepsilon}\right), 
$$
where Pol$(y)$ is a cubic polynomial.
Hence, for $Y$ in the above range, we get
$$
\left(\sum\limits_{1\le x\le Y} d^2(n-x)\right)^{1/2}\left(\sum\limits_{1\le x\le Y} d^2(n+x)\right)^{1/2}\ll Y\log^3 n. 
$$
Therefore, if 
$$
M\ge Y\log^3 n,
$$
then the asymptotic formula  
$$
\sum\limits_{\substack{(x_1,x_2,x_3)\in \mathbb{N}^3\\ x_1^2+x_2^2-x_3^2=n^2\\ 2|x_3}} \Phi(x_3)=\frac{2}{\pi} \mathcal{P}(n)\hat{\Phi}(0)M\log n +O\left(M(\log \log n)^{18}\right)
$$
holds. Thus, we fix
$$
Y:=\frac{M}{\log^3 n}.
$$
Adjusting $\varepsilon$, we now get the following final result. Below, to be consistent with the main result in \cite{FrIw}, we also allow negative $x_1$ and $x_2$, which enlarges the number of solutions by another factor of 4. 

\begin{theorem}
Fix $\varepsilon \in (0,1)$. Assume that $n$ is odd and 
$$
n^{7/8+\varepsilon}\le M\le n.
$$ 
Let $\Phi:\mathbb{R}\rightarrow [0,1]$ be a fixed smooth function supported in $[1,2]$ which is not constant 0. Then we have  
\begin{equation*}
\sum\limits_{\substack{(x_1,x_2,x_3)\in \mathbb{Z}^3\\ x_1^2+x_2^2-x_3^2=n^2\\ 2|x_3}} 
\Phi\left(\frac{x_3}{M}\right) = \frac{32}{\pi} \mathcal{P}(n) (\log n)  \int\limits_{\mathbb{R}} \Phi\left(\frac{x}{M}\right)dx+O\left(M(\log \log n)^{18}\right)
\end{equation*}
as $n\rightarrow \infty$, where 
\begin{equation*}
\mathcal{P}(n):= 
\sum\limits_{\substack{f\in \mathbb{N}\\ f|n}} \frac{1}{f}\sum\limits_{\substack{g\in \mathbb{N}\\ g|n/f}} \frac{1}{g} \prod\limits_{\substack{p|n/(fg)\\ p\nmid g}} \left(1-\frac{2}{p}\right) \prod\limits_{\substack{p|g\\ p\nmid n/(fg)}} \left(1-\frac{p-1}{p(p+1)}\right) \prod\limits_{\substack{p|g\\ p|n/(fg)}} \left(1-\frac{2}{p}-\frac{p-1}{p(p+1)}\right).
\end{equation*}
If $n$ is square-free, then 
$$
\mathcal{P}(n)= \prod\limits_{p|n} \left(1-\frac{p-1}{p^2(p+1)}\right).
$$
\end{theorem}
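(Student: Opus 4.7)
The plan is to assemble the pieces developed in the preceding sections. For $S_1^{\sharp,-}$, the splitting $T_{\mu,\nu} = \sum_{i,j=0}^{1} T_{\mu,\nu}^{i,j}$ of \eqref{split} cleanly separates main and error contributions: the term $T^{0,0}_{\mu,\nu}$ produces the asymptotic \eqref{asymptoticestimate}, while $T^{0,1}_{\mu,\nu}$, $T^{1,0}_{\mu,\nu}$, $T^{1,1}_{\mu,\nu}$ are controlled by \eqref{S101end}, \eqref{S110end}, \eqref{S111end} respectively. Under the two standing conditions $Y \le n/\log n$ of \eqref{Mbound} and $Y \ge n^{7/8+5\varepsilon}$ of \eqref{Mcondition3} (which dominates \eqref{Mcondition2}), all three error pieces collapse into $O(M(\log\log n)^{18})$, yielding
$$
S_1^{\sharp,-} = \frac{2}{\pi} \mathcal{P}(n) \hat{\Phi}(0) M \log n + O\left(M(\log\log n)^{18}\right).
$$

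Next I would lift this asymptotic to the full sum $S$. A parallel analysis of $S_1^{\sharp,+}$, which differs only by using $\Phi^{+}$ in place of $\Phi^{-}$, goes through without modification, and the sandwich \eqref{sandwich} then transfers the asymptotic to $S_1^{\sharp}$. As noted after \eqref{dec}, $S_1^{\flat}$ is handled by the analogous route with $d_1$ and $d_2$ swapped at the congruence-elimination step, yielding the same main term and error bound. Combining via \eqref{dec} and then \eqref{2S1} produces
$$
S = \frac{8}{\pi} \mathcal{P}(n) \hat{\Phi}(0) M \log n + O\left(M(\log\log n)^{18}\right).
$$

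Finally I would set $Y := M/(\log n)^{3}$: after harmlessly adjusting $\varepsilon$, the hypothesis $M \ge n^{7/8+\varepsilon}$ forces $Y \ge n^{7/8+5\varepsilon}$, while $Y \le M \le n/\log n$ is automatic, so $Y$ sits in the admissible window. The weighted sum $S$ differs from $\sum_{(x_1,x_2,x_3)\in \mathbb{N}^3,\, 2|x_3} \Phi(x_3/M)$ (sum restricted by the equation) only on the transition pieces of $\Phi_1 = \Phi_2$. The upper piece $[X,2X]$ is vacuous since $x_1^2 \le n^2 + 4M^2 < X^2$, and the lower discrepancy is bounded, on factoring $n^2-x_1^2 = (n-x_1)(n+x_1)$ and applying Cauchy--Schwarz with Ramanujan's $\sum_{r \le R}d^2(r) \ll R(\log R)^3$, by
$$
\sum_{1 \le x \le Y} d(n^2-x^2) \ll Y(\log n)^3 = M.
$$
Allowing $x_1,x_2$ to take both signs then multiplies the count by $4$ (the zero cases contribute nothing in the relevant range), producing the factor $32/\pi$; substituting $\hat{\Phi}(0) M = \int_{\mathbb{R}} \Phi(x/M)\,dx$ converts the main term to the stated form, and the square-free simplification of $\mathcal{P}(n)$ is \eqref{Pnsimplified}. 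The genuine obstacle throughout is the Weil-bound estimate for $S_{1,1,1}^{\sharp,-}$: it is that step which dictates the exponent $7/8$, and all subsequent work is essentially bookkeeping.
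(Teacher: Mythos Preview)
Your proposal is correct and follows essentially the same route as the paper: assemble the main-term estimate \eqref{asymptoticestimate} with the error bounds \eqref{S101end}, \eqref{S110end}, \eqref{S111end} to get the asymptotic for $S_1^{\sharp,-}$, pass to $S_1^{\sharp}$ via the sandwich, to $S_1^{\flat}$ by symmetry, then to $S$ via \eqref{2S1} and \eqref{dec}; bound the discrepancy with the unweighted sum by $\sum_{x\le Y} d(n^2-x^2)\ll Y(\log n)^3$ using Cauchy--Schwarz and Ramanujan's formula; fix $Y=M/(\log n)^3$; and multiply by $4$ for the signs of $x_1,x_2$. One small wording slip: you write ``$Y\le M\le n/\log n$ is automatic,'' but only $M\le n$ is assumed---what you actually need, and what does follow, is $Y=M/(\log n)^3\le n/(\log n)^3\le n/\log n$.
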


{\bf Acknowledgement:} The author would like to thank the Ramakrishna Mission Vivekananda Educational and Research Institute for excellent working conditions.

\end{document}